\documentclass[a4paper, twoside, 11pt, english]{article}

\usepackage[T1]{fontenc}
\usepackage[utf8]{inputenc}
\usepackage{etoolbox}
\usepackage{amsmath,amsthm,amssymb,stmaryrd}
\usepackage{mathtools}
\usepackage[ttscale=.875]{libertine}
\usepackage[libertine]{newtxmath}
\usepackage[numbers,sort&compress]{natbib}
\usepackage{comment}
\usepackage{ifthen}
\usepackage{algpseudocode}
\usepackage{algorithm2e}
\usepackage{multicol}
\usepackage{multirow}
\usepackage{fancyhdr, titlesec, url, enumerate, microtype,setspace}
\usepackage{tikz-qtree,varwidth}
\usepackage{youngtab}
\usepackage{ytableau}
\usepackage[english]{babel}
\usepackage{hyperref}
\usepackage[all,knot,poly]{xy}
\usepackage{tikz, tikz-3dplot, pgfplots}
\usepackage[normalem]{ulem}
\usepackage{graphicx} 
\usepackage{float}
\usetikzlibrary{decorations.pathreplacing}
\usetikzlibrary[positioning, patterns]
\usetikzlibrary{arrows.meta}

\usetikzlibrary{calc}
\CompileMatrices
\newcommand{\periodafter}[1]{\ifstrempty{#1}{}{#1.}}
\titleformat{\section}[block]{\scshape\filcenter\LARGE}{\thesection.}{.5em}{}
\titleformat{\subsection}[block]{\bfseries\filcenter\large}{\thesubsection.}{.5em}{\medskip}
\titleformat{\subsubsection}[runin]{\bfseries}{\thesubsubsection.}{.5em}{\periodafter}%{}[.]
\titlespacing{\subsubsection}{0pt}{\topsep}{.5em}

\titleformat{\section}[block]{\scshape\filcenter\Large}{\thesection.}{.5em}{}
\titleformat{\subsection}[runin]{\bfseries}{\thesubsection.}{.5em}{}[.]
\titleformat{\subsubsection}[runin]{\bfseries}{\thesubsubsection.}{.5em}{}[.]
\titlespacing{\subsubsection}{0pt}{10pt}{.5em}

\theoremstyle{plain}
  	\newtheorem{theorem}[subsubsection]{Theorem}
  	\newtheorem{proposition}[subsubsection]{Proposition}
	\newtheorem{lemma}[subsubsection]{Lemma}
  	\newtheorem{corollary}[subsubsection]{Corollary}

\theoremstyle{definition}

	\newtheorem{example}[subsubsection]{Example}
	\newtheorem{remark}[subsubsection]{Remark}

	\makeatletter
\def\@equationname{equation}

\makeatother

\fancypagestyle{plain}{
\fancyhf{}\fancyfoot[LC,RC]{}
\fancyfoot[LE,RO]{$\thepage$}

}

\newcount\hh
\newcount\mm
\mm=\time
\hh=\time
\divide\hh by 60
\divide\mm by 60
\multiply\mm by 60
\mm=-\mm
\advance\mm by \time
\def\hhmm{\number\hh:\ifnum\mm<10{}0\fi\number\mm}

\UseTips
\SelectTips{eu}{11}

\newdir{ >}{{}*!/-10pt/@{>}}
\newdir{ -}{{}*!/-10pt/@{}}
\newdir{> }{{}*!/+10pt/@{>}}

\makeatletter

\xyletcsnamecsname@{dir4{}}{dir{}}
\xydefcsname@{dir4{-}}{\line@ \quadruple@\xydashh@}
\xydefcsname@{dir4{.}}{\point@ \quadruple@\xydashh@}
\xydefcsname@{dir4{~}}{\squiggle@ \quadruple@\xybsqlh@}
\xydefcsname@{dir4{>}}{\Tttip@}
\xydefcsname@{dir4{<}}{\reverseDirection@\Tttip@}

\xydef@\quadruple@#1{%
	\edef\Drop@@{%
		\dimen@=#1\relax
		\dimen@=.5\dimen@
		\A@=-\sinDirection\dimen@
		\B@=\cosDirection\dimen@
		\setboxz@h{%
			\setbox2=\hbox{\kern3\A@\raise3\B@\copy\z@}%
			\dp2=\z@ \ht2=\z@ \wd2=\z@ \box2
			\setbox2=\hbox{\kern\A@\raise\B@\copy\z@}%
			\dp2=\z@ \ht2=\z@ \wd2=\z@ \box2
			\setbox2=\hbox{\kern-\A@\raise-\B@\copy\z@}%
			\dp2=\z@ \ht2=\z@ \wd2=\z@ \box2
			\setbox2=\hbox{\kern-3\A@\raise-3\B@ \noexpand\boxz@}%
			\dp2=\z@ \ht2=\z@ \wd2=\z@ \box2
		}%
		\ht\z@=\z@ \dp\z@=\z@ \wd\z@=\z@ \noexpand\styledboxz@
	}%
}

\xydef@\Tttip@{\kern2pt \vrule height2pt depth2pt width\z@
	\Tttip@@ \kern2pt \egroup
	\U@c=0pt \D@c=0pt \L@c=0pt \R@c=0pt \Edge@c={\circleEdge}%
	\def\Leftness@{.5}\def\Upness@{.5}%
	\def\Drop@@{\styledboxz@}\def\Connect@@{\straight@{\dottedSpread@\jot}}}
	
\xydef@\Tttip@@{%
	\dimen@=.25\dimen@
 	\B@=\cosDirection\dimen@
	\setboxz@h\bgroup\reverseDirection@\line@ \wdz@=\z@ \ht\z@=\z@ \dp\z@=\z@
	{\vDirection@(1,-1)\xydashl@ \xyatipfont\char\DirectionChar}%
	{\vDirection@(1,+1)\xydashl@ \xybtipfont\char\DirectionChar}%
}

\xydef@\ar@form{
	\ifx \space@\next \expandafter\DN@\space{\xyFN@\ar@form}%
	\else\ifx ^\next \DN@ ^{\xyFN@\ar@style}\edef\arvariant@@{\string^}%
	\else\ifx _\next \DN@ _{\xyFN@\ar@style}\edef\arvariant@@{\string_}%
	\else\ifx 0\next \DN@ 0{\xyFN@\ar@style}\def\arvariant@@{0}%
	\else\ifx 1\next \DN@ 1{\xyFN@\ar@style}\def\arvariant@@{1}%
	\else\ifx 2\next \DN@ 2{\xyFN@\ar@style}\def\arvariant@@{2}%
	\else\ifx 3\next \DN@ 3{\xyFN@\ar@style}\def\arvariant@@{3}%
	\else\ifx 4\next \DN@ 4{\xyFN@\ar@style}\def\arvariant@@{4}%
	\else\ifx \bgroup\next \let\next@=\ar@style
	\else\ifx [\next \DN@[##1]{\ar@modifiers{[##1]}}%]
	\else\ifx *\next \DN@ *{\ar@modifiers}%
	\else\addLT@\ifx\next \let\next@=\ar@slide
	\else\ifx /\next \let\next@=\ar@curveslash
	\else\ifx (\next \let\next@=\ar@curveinout %)
	\else\addRQ@\ifx\next \addRQ@\DN@{\ar@curve@}%
	\else\addLQ@\ifx\next \addLQ@\DN@{\xyFN@\ar@curve}%
	\else\addDASH@\ifx\next \addDASH@\DN@{\defarstem@-\xyFN@\ar@}%
	\else\addEQ@\ifx\next \addEQ@\DN@{\def\arvariant@@{2}\defarstem@-\xyFN@\ar@}%
	\else\addDOT@\ifx\next \addDOT@\DN@{\defarstem@.\xyFN@\ar@}%
	\else\ifx :\next \DN@:{\def\arvariant@@{2}\defarstem@.\xyFN@\ar@}%
	\else\ifx ~\next \DN@~{\defarstem@~\xyFN@\ar@}%
	\else\ifx !\next \DN@!{\dasharstem@\xyFN@\ar@}%
	\else\ifx ?\next \DN@?{\ar@upsidedown\xyFN@\ar@}%
	\else \let\next@=\ar@error
	\fi\fi\fi\fi\fi\fi\fi\fi\fi\fi\fi\fi\fi\fi\fi\fi\fi\fi\fi\fi\fi\fi\fi \next@}

\makeatother

\newcommand{\fl}{\rightarrow}

\newcommand{\dfl}{\Rightarrow}

\newcommand{\tfl}{\Rrightarrow}
\newcommand{\qfl}{\xymatrix@1@C=10pt{\ar@4 [r] &}}

\newcommand{\tck}[1]{#1^{\top}}

\DeclareMathOperator{\YoungT}{Yt}

\DeclareMathOperator{\col}{Col}

\DeclareMathOperator{\height}{ht}
\DeclareMathOperator{\rowword}{r}

\renewcommand{\phi}{\varphi}
\renewcommand{\epsilon}{\varepsilon}

\newcommand{\Ar}{\EuScript{A}}

\newcommand{\M}{\mathbf{M}}

\def\catego#1{{\bf{\sf #1}}}

\newcommand{\Cat}{\catego{Cat}}
\newcommand{\Act}{\catego{Act}}

\newcommand{\Colo}{\mathbf{Coln}}
\newcommand{\Std}{\mathbf{Std}}
\newcommand{\inv}{\operatorname{inv}}
\newcommand{\Inv}{\operatorname{Inv}}

\definecolor{cyan}{RGB}{175,238,238} 
\definecolor{Red}{rgb}{0.96,0.17,0.20}
\definecolor{RedD}{rgb}{0.57, 0.0, 0.04}
\definecolor{Green}{rgb}{0.0,1.0,0.0}
\definecolor{GreenL}{rgb}{0.0,1.0,0.0} 
\definecolor{GreenD}{rgb}{0.64,0.76,0.68} 
\definecolor{Yellow}{rgb}{1.0,1.0,0.19}
\definecolor{YellowD}{rgb}{0.93, 0.84, 0.25}
\definecolor{Blue}{rgb}{0.0,1.0,1.0}
\definecolor{BlueD}{rgb}{0.63,0.79,0.95}
\definecolor{vert}{rgb}{0,0.45,0}
\definecolor{bazaar}{rgb}{0.6, 0.47, 0.48}
\definecolor{bronze}{rgb}{0.8, 0.5, 0.2}
\definecolor{darkspringgreen}{rgb}{0.09, 0.45, 0.27}

\def\Ar{\mathcal{A}}

\makeatletter
\def\blfootnote{\xdef\@thefnmark{}\@footnotetext}
\makeatother

\def\Xr{\mathcal{X}}

\begin{document}
\thispagestyle{empty}

\begin{center}

\begin{doublespace}
\begin{huge}
{\scshape The tableaux monoid:\\  a rewriting-theoretic approach}
\end{huge}

\bigskip
\hrule height 1.5pt 
\bigskip

\begin{Large}
{\scshape Nohra Hage }
\end{Large}

\vspace{0.5cm} 

\end{doublespace}

\vspace{1cm} 

\begin{small}\begin{minipage}{14cm}
\noindent\textbf{Abstract -- } 
We study the tableaux monoid from the perspective of rewriting theory.
We first construct a finite quadratic convergent presentation based on column generators and prove that it yields a biautomatic structure.
We then extend this presentation to a finite coherent presentation in the sense of polygraphic rewriting.
These results provide canonical normal forms, yield homological and algorithmic consequences, and produce a higher-dimensional presentation that can be used to describe actions of the tableaux monoid on categories.
\medskip

\smallskip\noindent\textbf{Keywords --} tableaux monoid, convergent presentations, biautomaticity, coherent presentations. 
\medskip

\smallskip\noindent\textbf{M.S.C. 2020 -- Primary:} 20M05, 68Q42, 05E05. \textbf{Secondary:} 05E10, 20M50, 18N30.
\end{minipage}\end{small}

\begin{small}\begin{minipage}{12cm}
\renewcommand{\contentsname}{}
\setcounter{tocdepth}{1}
\tableofcontents
\end{minipage}
\end{small}
\end{center}

\tikzset{every tree node/.style={minimum width=1em,draw,circle},
         blank/.style={draw=none},
         edge from parent/.style=
         {draw,edge from parent path={(\tikzparentnode) -- (\tikzchildnode)}},
         level distance=0.8cm}
\vspace{0.5cm}

\section{Introduction}
\label{S:Introduction}

The combinatorics of Young tableaux has long played a central role in algebraic combinatorics, representation theory, and combinatorial algebra. It appears in a wide variety of settings, including the representation theory of classical groups, crystal bases, and several families of combinatorial monoids and rewriting systems,~\cite{Fulton97,CainGrayMalheiro15,CainGrayMalheiro19,Hage25,Hage2021Super,Hage15,HageMalbos22,HageMalbos17,Hage2022RSK, Hage26}.
Operations obtained by horizontally concatenating rows and then sorting the entries in weakly increasing order within each row had already appeared in related settings: for skew-shaped tableaux in~\cite{Per2016}, and for rectangular
semistandard tableaux in~\cite{ChangDuanFraserLi20}. For semistandard Young tableaux with entries in the ordered
alphabet~$[m]:=\{1,\ldots,m\}$ and height at most~$n$, the product and the resulting monoid were previously defined by Li in~\cite[Section~3]{LiDualCanonical26}. We denote this monoid by~$T_n^m$ and call it the \emph{tableaux monoid}
\footnote{Li~\cite{LiDualCanonical26} and
Daugherty--Gonzalez--Muniz--Ocal--Pan--Torres
\cite{DaughertyGonzalezMunizOcalPanTorres25} use the terminology
``monoid of semistandard Young tableaux.'' We use ``tableaux monoid''
for brevity.}.

Li proved that this monoid is commutative and cancellative~\cite[Lemma~3.2]{LiDualCanonical26}.
Daugherty--Gonzalez--Muniz--Ocal--Pan--Torres subsequently established,
among other properties, that it is torsion-free and generated by its column
tableaux~\cite[Proposition~2.2]
{DaughertyGonzalezMunizOcalPanTorres25}.
They also proved that the associated tableaux algebra is quadratic and has
the Poincar\'e--Birkhoff--Witt (PBW) property, and hence is Koszul
\cite[Theorem~2.13, Corollary~2.14]
{DaughertyGonzalezMunizOcalPanTorres25}.
In the present article, we develop a rewriting-theoretic approach to this
monoid. Our main contributions are a finite quadratic convergent presentation
on the set of columns, a biautomatic structure, and a finite coherent
extension in the sense of polygraphic rewriting. These constructions provide
canonical normal forms, algorithmic and homological consequences, and a
higher-dimensional description of categorical actions.

The starting point is a simple but crucial observation: the tableau product admits a local normalization process. More precisely, multiplication can be described by finitely many quadratic rules acting on adjacent columns. This perspective makes it possible to study the monoid both algorithmically and homotopically.

Our first main result is a finite quadratic convergent presentation of~$T_n^m$ on the set of columns; see Theorem~\ref{thm:ColumnPresentation}. The normal forms for this presentation are exactly the semistandard sequences of columns, namely the column decompositions of tableaux. As a consequence, the tableaux monoid admits a canonical language of normal forms, and it is of type~$\mathrm{FP}_\infty$ and~$\mathrm{FDT}_\infty$.

Our second main result is the construction of a biautomatic structure for~$T_n^m$; see Theorem~\ref{thm:biautomatic}. Using the column normal forms, we describe right multiplication by a generator by means of a finite-state transducer based on a single right-to-left normalization pass. Since the monoid is commutative, the same construction also governs left multiplication.  As a consequence, we deduce a quadratic-time solution to the word problem. Using the definition of the star product, we also establish decidability of divisibility and prove that all Green relations on~$T_n^m$ are trivial. In this way, the tableaux monoid joins the broader family of well-behaved combinatorial monoids, such as plactic-like monoids~\cite{CainGrayMalheiro15,CainGrayMalheiro19,CainGrayMalheiro15b}, while retaining a comparatively simple normalization mechanism.

Our third main result is a finite coherent extension of the column presentation; see Theorem~\ref{T:ColumnCoherentPresentation}. In the language of polygraphic rewriting, this provides a higher-dimensional presentation of the tableaux monoid in which the critical branchings of the convergent column presentation are filled by generating $3$-cells. Comparable coherent presentations have already been constructed for Artin monoids~\cite{GaussentGuiraudMalbos15}, monoids admitting a right-noetherian Garside family~\cite{CurienDuricGuiraud23}, plactic monoids of several types~\cite{HageMalbos17, Meha2021}, Chinese monoids~\cite{HageMalbos22}, and stylic monoids~\cite{hage2026coherent}. The coherent column presentation can be extended to a polygraphic resolution of~$T_n^m$ by the higher-dimensional
normalization methods of~\cite{GuiraudMalbos12advances}. Coherent presentations also provide a convenient framework for describing actions of the monoid on categories~\cite{GaussentGuiraudMalbos15}.

The paper is organized as follows. Section~\ref{S:ColumnPresentationsTableauxMonoid} reviews the necessary background on string rewriting and establishes the finite quadratic convergent column presentation. 
Section~\ref{S:BiautomaticStructuresTableauxMonoid} establishes biautomaticity, derives a quadratic-time solution to the word problem, and studies divisibility and Green's relations. Section~\ref{S:ColumnCoherentPresentationsTableauxMonoid} constructs a finite coherent extension, derives a rewriting proof of Koszulity, examines the size of the presentation, and concludes with a categorical interpretation of the coherent data.

\section{Finite convergent presentations of the tableaux monoid}
\label{S:ColumnPresentationsTableauxMonoid}

In this section we develop finite convergent presentations of the tableaux monoid.
After recalling the basic notions of string rewriting, we introduce a finite
quadratic column presentation and prove its convergence.
This yields unique normal forms and implies that the monoid is of type~$\mathrm{FP}_\infty$ and $\mathrm{FDT}_\infty$.

\subsection{Convergent presentations of monoids}
\label{SS:ConvergentPresentations}
We begin by recalling the basic definitions and properties of string rewriting systems.
General references for rewriting theory include~\cite{BookOtto93}. 
Let~$X$ be a set.
We denote by $X^\ast$ the free monoid generated by $X$, whose elements are called \emph{words},
with multiplication given by concatenation and neutral element the empty word~$\varepsilon$.
For a word $u = x_1 \cdots x_k$ with $x_i$ in~$X$, its length is denoted by $|u| = k$.
A \emph{string rewriting system} on~$X$ is a set $R \subseteq X^\ast \times X^\ast$.
An element $\beta = (u,v)$ of~$R$ is called a \emph{rewriting rule}, written~$\beta : u \fl v$,
where $u$ is the \emph{source} and $v$ the \emph{target} of the rule.
We denote these by~$s(\beta)=u$ and~$t(\beta)=v$.
A \emph{one-step reduction} induced by $\beta$ is a transformation~$wuw' \;\fl\; wvw'$
for arbitrary words $w,w'$  in~$X^\ast$, and is denoted by $w\beta w'$.
The set of all one-step reductions generates a binary relation on~$X^\ast$, written~$\fl_R$.
Its reflexive and transitive closure is denoted by~$\fl_R^\ast$ and called the \emph{rewriting relation}.
A word $u$ is said to be \emph{irreducible} if no rule of $R$ applies to it.
A \emph{normal form} of a word~$u$ is an irreducible word~$v$ such that $u \fl_R^\ast v$. 
The congruence on~$X^\ast$ generated by $R$ is the equivalence relation
obtained as the reflexive, symmetric, and transitive closure of $\fl_R$,
and is denoted by $\approx_R$.
The monoid presented by $R$ is the quotient~$X^\ast / \!\approx_R$.

The system $R$ is said to be \emph{terminating} if there is no infinite rewriting sequence,
and \emph{normalizing} if every word admits at least one normal form.
The system $R$ is said to be \emph{quadratic} if every rule has a source and a target of length $2$.
The system~$R$ is \emph{reduced} if, for every rule
$\beta:u\fl v$ in~$R$, the word~$v$ is irreducible with
respect to~$R$, and~$u$ is irreducible with respect to
$R\setminus\{\beta\}$.
If~$R$ is quadratic and every reducible word of length~$2$ is the source of at most one rule, as for the column
presentation below, then a word of length~$3$ admits at most two distinct one-step reductions, one in each adjacent position.

A \emph{branching} of a rewriting system $R$ is a pair $(f,g)$ of rewriting paths with the same source.
If both $f$ and $g$ consist of a single rewriting step, the branching is said to be \emph{local}.
Local branchings are classified as \emph{aspherical}, when $f=g$,  \emph{Peiffer}, when the two reductions act on disjoint subwords, and  \emph{overlapping}, in all other cases.
An overlapping local branching is called \emph{critical} if it is minimal with respect to contextual inclusion,
that is, it cannot be obtained by inserting a smaller branching into a larger word.
A branching~$(f,g)$ is said to be \emph{confluent} if there exist rewriting paths~$f'$ and~$g'$ starting at the targets of~$f$ and~$g$, respectively, and ending at a common word.
The rewriting system $R$ is \emph{confluent} if all branchings are confluent,
and \emph{convergent} if it is both confluent and terminating.
In that case, every word admits a unique normal form. A fundamental result linking termination and confluence is due to Newman.
It states that, for a terminating rewriting system, confluence is equivalent
to local confluence,~\cite{Newman42,BookOtto93}.

\subsection{Tableaux monoids}
\label{SS:Tableauxmonoids}
Let~$m$ and $n$ be in~$\mathbb{N}$ with~$m\ge n\ge1$.
A \emph{partition} of height at most~$n$ is a weakly decreasing
sequence of nonnegative integers
\[
\lambda=(\lambda_1,\lambda_2,\ldots,\lambda_n)
\qquad\text{with}\qquad
\lambda_1\ge \lambda_2\ge \cdots \ge \lambda_n\ge 0.
\]
We allow the \emph{empty partition}~$\varnothing=(0,\ldots,0)$.
The \emph{height} of~$\lambda$ is the number of nonzero parts.
A partition~$\lambda$ is identified with its \emph{Ferrers diagram}
(in English notation), namely the left-justified array of boxes having~$\lambda_i$ boxes in the $i$-th row.
A \emph{semistandard Young tableau}, or \emph{tableau} for short,  of shape $\lambda$ is a filling of the boxes of the Ferrers diagram of $\lambda$ with entries of~$[m]$ such that the rows are weakly increasing left-to-right and the columns are strictly increasing top-to-bottom. 
For the empty partition~$\varnothing$, there is a unique tableau,
called the \emph{empty tableau}.
Denote by~$\YoungT_m^n$ the set of all tableaux of all possible partition shapes with at most~$n$ rows and entries in~$[m]$, and by~$\YoungT_m(\lambda)$ the set of all tableaux of fixed shape partition~$\lambda$ with entries in~$[m]$.

Let $T$ be in~$\YoungT_m(\lambda)$ and $T'$ be
in~$\YoungT_m(\mu)$.
The \emph{star product} $T \star T'$ is the tableau in
$\YoungT_m(\lambda+\mu)$ obtained by horizontally concatenating
the rows of $T$ and $T'$ and then, in each row independently,
sorting the resulting entries in weakly increasing order
from left to right.
This product is well defined, associative, and commutative,
see~\cite{LiDualCanonical26} and
\cite[Proposition~2.2]{DaughertyGonzalezMunizOcalPanTorres25}.
For instance, consider the following tableaux in~$\YoungT_3^2$:
\[
T =
\ytableausetup{mathmode, boxsize=1em}
\begin{ytableau}
1&2\\
3
\end{ytableau},
\qquad
T' =
\ytableausetup{mathmode, boxsize=1em}
\begin{ytableau}
1\\
2
\end{ytableau}.
\]
Then
\[
T\star T' =
\ytableausetup{mathmode, boxsize=1em}
\begin{ytableau}
1&1&2\\
2&3
\end{ytableau}.
\]
The empty tableau~$\emptyset$ satisfies
$T\star\emptyset=\emptyset\star T=T$ for every~$T$
in~$\YoungT_m^n$.
Thus~$(\YoungT_m^n,\star,\emptyset)$ is a commutative monoid,
which we denote by~$T_n^m$ and call the
\emph{tableaux monoid}.

Set~$\YoungT_{m,+}^n=\YoungT_m^n\setminus\{\emptyset\}$.
We denote by~$\Std$ the \emph{standard rewriting system} whose generating
set is~$\YoungT_{m,+}^n$ and whose rules are
\[
TT'\;\fl\;T\star T',
\qquad
T,T'\in\YoungT_{m,+}^n.
\]
Here the empty word~$\varepsilon$ represents the empty tableau and hence the
unit of the monoid.
Every rewriting step decreases word length by~$1$, so~$\Std$ is terminating.
Its overlapping critical branchings have source~$TT'T''$, and the two
resulting reductions are completed using~$(T\star T')\star T''=T\star(T'\star T'')$.
Thus~$\Std$ is confluent. Its irreducible words are precisely~$\varepsilon$ and the one-letter words~$T$ with
$T\in\YoungT_{m,+}^n$, which are naturally identified with all tableaux,
including the empty tableau. Under this identification, multiplication in
the presented monoid is~$\star$. Therefore~$\Std$ is a convergent
presentation of~$T_n^m$, called the \emph{standard presentation}.

\subsection{The column presentation}
\label{SS:ColumnPresentation}
For $1 \le k \le n$, let $\YoungT_m(1^k)$ denote the set of 
tableaux of shape~$(1^k)$ with entries in $[m]$.
Set
\[
\col_n(m) \;=\; \bigcup_{k=1}^n \YoungT_m(1^k).
\]
Elements of $\col_n(m)$ are called \emph{columns}.
The \emph{height} of a column~$A$, denoted~$\height(A)$, is the number of boxes in~$A$.
Each column $A$ in~$\col_n(m)$ of height $k$ is uniquely determined by its \emph{column reading}, that is, the strictly increasing $k$-tuple
$(a_1,\dots,a_k)$ in~$[m]^k$ obtained by listing its entries from top to bottom.
Conversely, every strictly increasing $k$-tuple in $[m]^k$ determines a unique column tableau of shape $(1^k)$.
We therefore identify columns with their column readings whenever convenient.
For $1\le i\le k$, we write $A(i)=a_i$, where rows are indexed from top to
bottom so that $A(1)$ denotes the top entry and $A(k)$ the bottom entry.
Given columns $A$ and~$B$ in~$\col_n(m)$, we say that the pair $AB$ is
\emph{semistandard} if the tableau obtained by placing $B$ immediately to the
right of $A$ is semistandard.
If $A$ has height $k$ and $B$ has height $l$, this condition is equivalent to
$l\le k$ and $A(i)\le B(i)$ for all $1\le i\le l$.
If this condition is not satisfied, the pair $AB$ is said to
\emph{require normalization}.

\begin{lemma}
\label{L:TwocolumnsLemmata}
Let $A$ and~$B$ be in~$\col_n(m)$.
Then the tableau~$A\star B$ always consists of exactly two columns.
\end{lemma}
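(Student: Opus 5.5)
The plan is to compute the shape of $A\star B$ directly from the definition of the star product and read off its column structure. Write $k=\height(A)$ and $l=\height(B)$, and assume by commutativity of $\star$ (stated above, from \cite{LiDualCanonical26}) that $k\ge l$. The shape of $A$ is $(1^k)$ and that of $B$ is $(1^l)$, so by definition $A\star B$ has shape $(1^k)+(1^l)$. Here partitions of height at most $n$ are added componentwise as $n$-tuples.

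First I would compute this sum explicitly. Its parts are $2$ in rows $1,\dots,l$, then $1$ in rows $l+1,\dots,k$, then $0$. The resulting partition is $(2^l,1^{k-l})$. The conjugate of $(2^l,1^{k-l})$ is $(k,l)$, so the Ferrers diagram has exactly two nonempty columns, of lengths $k$ and $l$. Both columns are nonempty because $k,l\ge 1$, since columns lie in $\YoungT_m(1^k)$ with $k\ge1$.

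Next I would make the column content of $A\star B$ explicit. In row $i\le l$ the two entries $A(i)$ and $B(i)$ are sorted, giving $\min(A(i),B(i))$ in the first column and $\max(A(i),B(i))$ in the second. In row $i>l$ the single entry $A(i)$ stays in the first column. The first column therefore reads $(\min(A(1),B(1)),\dots,\min(A(l),B(l)),A(l+1),\dots,A(k))$, and the second reads $(\max(A(1),B(1)),\dots,\max(A(l),B(l)))$.

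Finally, the claim that $A\star B$ is indeed a semistandard tableau is part of the well-definedness of $\star$ cited from \cite{LiDualCanonical26,DaughertyGonzalezMunizOcalPanTorres25}. If desired, it can be checked directly. The entrywise minimum and maximum of two strictly increasing sequences are again strictly increasing. Moreover $\min(A(l),B(l))\le A(l)<A(l+1)$, which handles the junction between the two parts of the first column. Row weak increase holds by construction.

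The only point requiring any care is this junction check. Everything else is immediate from the definition, so no real obstacle is expected. The output of the proof is that $A\star B$ is the tableau whose two columns are the min-column of height $k$ and the max-column of height $l$, which gives the two-column statement of the lemma.
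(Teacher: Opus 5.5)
Your proof is correct and follows essentially the same route as the paper: both arguments read off from the rowwise definition of $\star$ that row $i$ contains two entries for $i\le\min(k,l)$ and one entry otherwise, so $A\star B$ has two columns, the left one formed by the rowwise minima together with the leftover entries and the right one by the rowwise maxima. You also check strict column increase, including the junction $\min(A(l),B(l))\le A(l)<A(l+1)$, and note that both columns are nonempty since $k,l\ge1$; this makes explicit what the paper leaves implicit or takes from the well-definedness of $\star$.
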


\begin{proof}
By definition, the tableau $A \star B$ is obtained by, in each row $i$,
concatenating the entries $A(i)$ and $B(i)$ when both are defined and then
sorting them in weakly increasing order.
Thus each row of $A \star B$ contains two entries whenever both $A(i)$ and
$B(i)$ are defined, and a single entry otherwise.
Reassembling the rows columnwise yields a tableau with exactly two columns,
whose left (resp.~right) column consists of the smaller (resp.~larger)
entry in each row.
\end{proof}

We consider words over~$\col_n(m)$ as formal horizontal concatenations of columns.
Define the \emph{column decomposition map}
\[
\col : \YoungT_m^n \longrightarrow (\col_n(m))^\ast
\]
by reading a tableau from left to right as a sequence of columns. In particular, we have~$\col(\emptyset)=\varepsilon$.
The map $\col$ is injective, and its image consists precisely of the
words $C_1\cdots C_k$ such that each adjacent pair $C_iC_{i+1}$ is
semistandard.
Indeed, every tableau decomposes uniquely into its sequence of columns.
Conversely, every word~$C_1\cdots C_k$ such that each adjacent pair
$C_iC_{i+1}$ is semistandard assembles into a unique tableau.

We now define the rewriting system $\Colo$.
Its generating set is~$\Colo_1=\col_n(m)$.
Given columns~$A,B$ in~$\col_n(m)$ such that the pair~$AB$ is not
semistandard, let~$\col(A\star B)=A'B'$.
To this pair we associate the rewriting rule
\[
\alpha_{A,B}: AB \;\fl\; A'B'.
\]
Accordingly, the set of rewriting rules is
\[
\Colo_2=
\{\alpha_{A,B}\mid A,B\in\col_n(m)\text{ and }AB\text{ is not semistandard}\}.
\]

\begin{theorem}
\label{thm:ColumnPresentation}
The column rewriting system~$\Colo$ is a finite, reduced, quadratic, and convergent
presentation of the tableaux monoid~$T_n^m$, called the \emph{column presentation}.
\end{theorem}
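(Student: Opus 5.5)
The proof proceeds by checking the four adjectives in turn. Convergence is the substantial part, and I would derive confluence from termination together with a uniqueness argument, so that no critical branchings need to be examined.

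\emph{Finite and quadratic.} Since $\col_n(m)$ is finite, so is $\Colo_2$. By Lemma~\ref{L:TwocolumnsLemmata}, $\col(A\star B)$ is a word of length exactly~$2$, so every rule $\alpha_{A,B}$ has source and target of length~$2$.

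\emph{Reduced.} The target $A'B'=\col(A\star B)$ is the column decomposition of a tableau. Its only adjacent pair is therefore semistandard, so $A'B'$ is irreducible. Each non-semistandard pair $AB$ is the source of exactly one rule. No rule has a source of length~$1$. Hence $AB$ is irreducible with respect to $\Colo_2\setminus\{\alpha_{A,B}\}$.

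\emph{Presentation of $T_n^m$.} Consider the monoid map $\pi:\col_n(m)^\ast\to T_n^m$ sending a word $C_1\cdots C_k$ to $C_1\star\cdots\star C_k$.
\begin{itemize}
\item It is surjective. For any tableau $T$ with $\col(T)=C_1\cdots C_k$, concatenating the rows of the $C_i$ in order already gives weakly increasing rows. Hence $\pi(\col(T))=T$, and in particular $T_n^m$ is generated by columns.
\item It identifies source and target of every rule. Indeed, $\pi(A'B')=A'\star B'=A\star B$ by the previous point applied to the tableau $A\star B$.
\end{itemize}
So $\pi$ factors through $\col_n(m)^\ast/\!\approx_{\Colo}$. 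To see that the induced map is injective, it suffices to show that every word rewrites to an irreducible word and that distinct irreducible words have distinct images. Irreducible words are exactly the words whose adjacent pairs are all semistandard, that is, the image of $\col$. Since $\pi\circ\col=\mathrm{id}$ on these, $\pi$ is injective on irreducible words. Given termination, if $w\approx_{\Colo}w'$ then their normal forms $v,v'$ satisfy $\pi(v)=\pi(w)=\pi(w')=\pi(v')$, hence $v=v'$. This yields the presentation, and it also yields confluence: any two normal forms of the same word have the same image under $\pi$, hence coincide. Everything therefore reduces to termination.

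\emph{Termination (the main obstacle).} Rules preserve length, and for fixed length there are finitely many words. It is therefore enough to find a potential that strictly increases under each one-step reduction in a well-founded way. Take $\Phi(C_1\cdots C_k)$ to be the pair consisting of:
\begin{itemize}
\item first, the height vector $(\height C_1,\dots,\height C_k)$, compared lexicographically;
\item second, the integer $\sum_j j\cdot s(C_j)$, where $s(C)$ is the sum of the entries of~$C$.
\end{itemize}
Consider a reduction $AB\to A'B'$ at positions $j,j+1$, with $\height A=k$ and $\height B=l$.
\begin{itemize}
\item If $k<l$, then $\height A'=\max(k,l)=l>k$. The height vector strictly increases lexicographically, since earlier positions are unchanged.
\item If $k\ge l$, heights are unchanged. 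Since $A'(i)=\min(A(i),B(i))$ and $AB$ is not semistandard, some $A(i)>B(i)$ with $i\le l$, so $s(A')<s(A)$. Moreover $s(A')+s(B')=s(A)+s(B)$. The weighted sum changes by $(j+1)-j$ times the mass moved from position $j$ to position $j+1$, so it strictly increases.
\end{itemize}
Both components take finitely many values on words of fixed length, so $\Phi$ is bounded and reductions cannot go on forever. This gives termination, and with the previous paragraph, convergence.
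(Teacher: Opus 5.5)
Your proof is correct, but it takes a different route from the paper on both substantive points.

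\textbf{Termination.} The paper uses a single natural-number measure. It pads each column with $\infty$ up to height $n$ and sums the inversion numbers of the $n$ padded row words. A rule sorts the adjacent factor $\widehat{A}(i)\widehat{B}(i)$ in each row, so the measure drops by the number of out-of-order rows. The padding absorbs your height case $k<l$ into the same computation. Your lexicographic potential (height vector first, then $\sum_j j\,s(C_j)$) is equally valid and perhaps more elementary. However, it only bounds rewriting sequences by the number of words of a given length. The inversion measure bounds them by $n\binom{p}{2}$ for words of length $p$.

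\textbf{Confluence and the presentation.} The paper invokes Newman's lemma and resolves each critical branching $ABC$ using associativity of $\star$. It then separately compares $\Colo$ with the standard presentation $\Std$ by a Tietze-type argument. Your global argument uses the evaluation morphism $\pi$, which is invariant under the rules, surjective, and injective on irreducible words. It gives uniqueness of normal forms, hence confluence, together with the isomorphism $\col_n(m)^\ast/\!\approx_{\Colo}\,\cong T_n^m$. It needs neither Newman's lemma nor a passage through $\Std$. In fact, the paper's resolution of each critical branching already rests on exactly this invariance-plus-injectivity fact, so your version shows the local analysis is not needed for convergence.

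What the paper's route buys is the explicit description of the critical branchings $ABC$. It reuses these to build the coherent extension in Theorem~\ref{T:ColumnCoherentPresentation}.
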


\begin{proof}
Since the set of columns~$\col_n(m)$ is finite and, by construction, each rewriting rule~$\alpha_{A,B}$ has both source and target of length~$2$, the rewriting system~$\Colo$ is finite and quadratic.
Moreover,~$\Colo$ is reduced. For every rule
$\alpha_{A,B}:AB\fl A'B'$, the target
$A'B'=\col(A\star B)$ is semistandard and therefore irreducible.
Every rule has a source of length~$2$, and~$\alpha_{A,B}$
is the unique rule with source~$AB$.
Hence~$AB$ is irreducible with respect to
$\Colo_2\setminus\{\alpha_{A,B}\}$.

Let us show that~$\Colo$ is terminating.
Set
\[
\widehat{[m]}=[m]\sqcup\{\infty\},
\qquad
1<2<\cdots<m<\infty .
\]
For a column~$A$ of height~$k$, define its \emph{padded entries} by
\[
\widehat{A}(i)=
\begin{cases}
A(i), & \text{if } 1\le i\le k,\\
\infty, & \text{if } k<i\le n.
\end{cases}
\]
Then a pair of columns~$AB$ is semistandard if and only if~$\widehat{A}(i)\le \widehat{B}(i)$, for all  $1\le i\le n$.
Now let~$w=C_1\cdots C_p$ be in~$\col_n(m)^\ast$.
For each row index~$1\le i\le n$, define the \emph{padded row word}
\[
\widehat{\rowword}_i(w)
=
\widehat{C_1}(i)\cdots \widehat{C_p}(i)
\in \widehat{[m]}^\ast.
\]
For a word~$x=x_1\cdots x_p$ in~$\widehat{[m]}^\ast$, define
\[
\inv(x)
=
\#\{(r,s)\mid 1\le r<s\le p,\ x_r>x_s\}.
\]
For~$w$ in~$\col_n(m)^\ast$, set
\[
\Inv(w)
=
\sum_{i=1}^{n}\inv\bigl(\widehat{\rowword}_i(w)\bigr).
\]
Since~$\Inv(w)\in\mathbb N$, it suffices to show that every rewriting step strictly decreases~$\Inv$.
Let~$\alpha_{A,B}$ be a rewriting rule of~$\Colo$, so that~$\col(A\star B)=A'B'$.
By construction, for every~$1\le i\le n$, we have
\[
\widehat{A'}(i)=\min\{\widehat{A}(i),\widehat{B}(i)\}\quad\text{ and }\quad \widehat{B'}(i)=\max\{\widehat{A}(i),\widehat{B}(i)\}.
\]
Thus, in each padded row word, the rewrite replaces the adjacent factor~$\widehat{A}(i)\widehat{B}(i)$
by its weakly increasing rearrangement.
If~$\widehat{A}(i)\le \widehat{B}(i)$, then the $i$-th padded row word is unchanged.
If~$\widehat{A}(i)>\widehat{B}(i)$, then the adjacent inversion~$\widehat{A}(i)\widehat{B}(i)$
is removed. Since the two letters are adjacent, their inversion relations with all other letters in the row remain unchanged, and therefore the inversion number of that row decreases by exactly~$1$.
Because~$AB$ is not semistandard, there exists at least one row~$i$ such that~$\widehat{A}(i)>\widehat{B}(i)$.
It follows that
\[
\Inv(UA'B'V)<\Inv(UABV)
\]
for all words~$U,V$ in~$\col_n(m)^\ast$.
Hence every rewriting step strictly decreases the natural number~$\Inv$, so no infinite rewriting sequence can exist.
Therefore~$\Colo$ is terminating.

Let us now prove that~$\Colo$ is confluent.
Since~$\Colo$ is terminating, it is enough by Newman's Lemma to establish local confluence.
Aspherical and Peiffer local branchings are trivially confluent, so it remains to consider only overlapping local branchings.
Since~$\Colo$ is quadratic and has exactly one rule for each non-semistandard pair, every overlapping local branching is obtained by adding left and right contexts to a critical branching with source~$ABC$, where~$A,B,C\in\col_n(m)$ and both~$AB$ and~$BC$ are non-semistandard.
The two one-step reductions are the column words~$\col(A\star B)C$ and~$A\col(B\star C)$, which represent the tableaux~$(A\star B)\star C$ and~$A\star(B\star C)$, respectively.
By associativity of the product~$\star$, these two tableaux are equal.
By termination, both one-step reducts admit irreducible reducts.
Every rewriting step preserves the star product of the columns,
and every irreducible column word is the column decomposition
of its tableau product.
Therefore both irreducible reducts are equal to~$\col(A\star B\star C)$, which is a common irreducible reduct.
Hence every local branching is confluent.
Since~$\Colo$ is terminating, Newman's Lemma implies that~$\Colo$ is confluent.

Let us finally show that~$\Std$ and~$\Colo$ present the same monoid.
Every nonempty tableau~$T\in\YoungT_{m,+}^n$ admits a unique column decomposition~$\col(T)=C_1\cdots C_k$, with~$k\ge1$ and~$C_i\in\col_n(m)$. The empty tableau is represented by~$\varepsilon$ in both
presentations.
Since each~$C_i$ is itself a tableau, repeated application of the defining rules of~$\Std$ yields
\[
C_1\cdots C_k \;\fl_\Std^\ast\; C_1\star\cdots\star C_k = T.
\]
Thus, in the monoid presented by~$\Std$, every tableau generator is equal to its corresponding column word.
Under this identification, a defining relation of~$\Std$ becomes
\[
\col(T)\col(T') \;\approx_\Std\; \col(T\star T').
\]
This relation follows from the rules of~$\Colo$, since successive normalization of adjacent non-semistandard pairs transforms the word~$\col(T)\col(T')$ into the unique semistandard column word representing the same tableau product, namely~$\col(T\star T')$.
Conversely, let $A,B\in\col_n(m)$ and write~$\col(A\star B)=A'B'$.
In the monoid presented by~$\Std$, one has~$AB = A\star B = A'B'$.
Hence every defining relation of~$\Colo$ holds in the monoid presented by~$\Std$.
Therefore, the presentations~$\Std$ and~$\Colo$ define the same congruence on column words and present the same monoid.
Since~$\Std$ presents the tableaux monoid~$T_n^m$, it follows that~$\Colo$ also presents~$T_n^m$.
\end{proof}

\subsubsection{PBW basis and normal forms}

Since~$\Colo$ is convergent, its irreducible column words are
precisely the words~$C_1\cdots C_p$ for which every adjacent
pair~$C_iC_{i+1}$ is semistandard.
Column decomposition identifies these words bijectively with  tableaux, with the empty word corresponding
to the empty tableau.
Order the columns lexicographically by their padded entry vectors,
using~$1<\cdots<m<\infty$, and equip column words with the induced
degree-lexicographic order.
For every rule~$AB\fl A'B'$, one has~$\widehat{A'}(i)\le\widehat A(i)$, for every~$1\le i\le n$, with strict inequality for at least one~$i$.
Thus~$A'<A$ lexicographically and~$A'B'<AB$ in the
degree-lexicographic order.
For any field~$\Bbbk$, convergence of~$\Colo$ and compatibility
with this monomial order imply that the defining
binomials~$AB-A'B'$ form a quadratic noncommutative
Gr\"obner basis.
Consequently, the irreducible monomials form a PBW
$\Bbbk$-basis of~$\Bbbk[T_n^m]$ associated with the column
presentation, see~\cite[Proposition~3.5.4]{GuiraudHoffbeckMalbos19}.
This gives an explicit normal-form realization of the PBW
property established in the proof of
\cite[Corollary~2.14]{DaughertyGonzalezMunizOcalPanTorres25}.

\subsubsection{Homological finiteness properties}

A monoid~$\M$ is of type left-$\mathrm{FP}_n$ if the trivial left
$\mathbb Z\M$-module~$\mathbb Z$ admits a projective resolution whose
modules are finitely generated through dimension~$n$. It is of type
left-$\mathrm{FP}_\infty$ if this holds in every dimension. The analogous
right-handed condition is defined using right modules.
A monoid is of type~$\mathrm{FDT}_3$ if it admits a finite
coherent presentation (see Subsection~\ref{SS:CoherentPresentations}), and of type~$\mathrm{FDT}_\infty$
if it admits a polygraphic resolution with finitely many
generating cells in each dimension.
Squier proved that a monoid admitting a finite convergent presentation is
of type left-$\mathrm{FP}_3$ and has finite derivation type~$\mathrm{FDT}_3$
\cite{Squier87,Squier94}. Higher-dimensional refinements of Squier's
construction associate with a finite convergent presentation a polygraphic
resolution having finitely many generating cells in every dimension.
Consequently, such a monoid is of type~$\mathrm{FDT}_\infty$ and
left-$\mathrm{FP}_\infty$,  see
\cite{GuiraudMalbos09, GuiraudMalbos12advances}.
For a commutative monoid the corresponding left-module and right-module
statements agree, and we abbreviate this common property to~$\mathrm{FP}_\infty$.

Since~$\Colo$ is finite and convergent by
Theorem~\ref{thm:ColumnPresentation}, we obtain:

\begin{corollary}
\label{Corollary:FP-FDT}
The tableaux monoid~$T_n^m$ is of type~$\mathrm{FDT}_\infty$ and of type~$\mathrm{FP}_\infty$.
\end{corollary}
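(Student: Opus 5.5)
The corollary follows by feeding Theorem~\ref{thm:ColumnPresentation} into the general results of Squier and of Guiraud--Malbos recalled just above. First, I record that $\Colo$ is a finite convergent presentation of $T_n^m$: its generating set $\col_n(m)$ is finite, and so is its set of rules $\Colo_2$, since there are at most $|\col_n(m)|^2$ pairs $AB$. Termination and confluence are exactly the content of Theorem~\ref{thm:ColumnPresentation}.

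Second, I apply the higher-dimensional extension of Squier's theorem \cite{GuiraudMalbos09,GuiraudMalbos12advances}. Given a finite convergent presentation, it produces a polygraphic resolution of the presented monoid, and the generating $(k+1)$-cells are indexed by critical $k$-fold branchings. Here I would add a remark that finiteness in every dimension is immediate in this case. Since $\Colo$ is quadratic, a critical $k$-fold branching has as source a word of $k+1$ columns in which every adjacent pair is non-semistandard. There are at most $|\col_n(m)|^{k+1}$ such words, so every dimension contains finitely many generating cells. This gives type $\mathrm{FDT}_\infty$.

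Third, for the homological statement, I abelianize the polygraphic resolution, as in \cite{GuiraudMalbos12advances}. This yields a free resolution of the trivial left $\mathbb Z T_n^m$-module $\mathbb Z$, finitely generated in each degree, so $T_n^m$ is of type left-$\mathrm{FP}_\infty$. The same argument applied to the reversed presentation, or simply the fact that $T_n^m$ is commutative, gives right-$\mathrm{FP}_\infty$, and the two notions coincide as noted in the text. Hence $T_n^m$ is of type $\mathrm{FP}_\infty$.

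There is no real obstacle, since everything is a citation. The only step needing care is the finiteness of the higher cells, and that is the finiteness of $k$-fold critical branchings, which follows from the counting bound in the second step.
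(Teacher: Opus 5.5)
Your proposal is correct and takes the same route as the paper. Both apply the Squier and Guiraud--Malbos higher-dimensional results to the finite convergent column presentation of Theorem~\ref{thm:ColumnPresentation}, and both use commutativity to identify left and right $\mathrm{FP}_\infty$. Your explicit bound of $|\col_n(m)|^{k+1}$ on critical $k$-fold branchings only spells out the finiteness in each dimension that the paper leaves inside the cited general result.
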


\section{Biautomatic structures for the tableaux monoid}
\label{S:BiautomaticStructuresTableauxMonoid}

In this section we prove that the tableaux monoid admits a
\emph{biautomatic structure}.
The proof is based on the finite convergent column presentation.
The locality and length-preserving nature of the column rewriting
rules make it possible to simulate left and right multiplication by
generators using finite-state transducers.

\subsection{Automatic and biautomatic structures}
\label{SS:Automaticity}

We briefly recall the notions of automatic and biautomatic monoids
used in this article. Standard references include
\cite{DuncanRobertsonRuskuc99,CainGrayMalheiro19,CainGrayMalheiro15b}.

Let $\Ar$ be a finite alphabet.
A \emph{finite-state transducer} is a tuple
\[
\mathcal T = (Q,\Ar,\Ar,\delta,q_0,F),
\]
where $Q$ is a finite set of states, $q_0\in Q$ is the initial state,
$F\subseteq Q$ is the set of accepting states, and
\[
\delta \subseteq Q\times (\Ar\cup\{\varepsilon\})
\times (\Ar\cup\{\varepsilon\})\times Q
\]
is a finite set of transitions.
We allow $\varepsilon$-transitions, so a transducer may in particular perform a final output step after the input has been exhausted.
A transition $(p,a,b,q)$, written
$p \xrightarrow{a\mid b} q$,
means that the machine may read the symbol $a$, output the symbol $b$,
and move from state $p$ to state $q$.
A pair of words $(u,v)$ in~$\Ar^*\times\Ar^*$ is accepted if
there exists a path from $q_0$ to some state in $F$
whose sequence of input labels spells $u$ and whose output labels spell $v$.
The set of all such pairs is a binary relation
$R\subseteq\Ar^*\times\Ar^*$,
called a \emph{rational relation}.
Thus a rational relation is recognized by a finite-state device that reads
the two tapes asynchronously, allowing~$\varepsilon$ on either tape.

Automaticity is defined in terms of regularity of certain padded relations.
Let $\$$ be a symbol not in~$\Ar$.
Given two words $u,v$ in~$\Ar^*$,
their right-padded version $\delta_R(u,v)$
is obtained by appending $\$$ symbols to the shorter word
so that both words have the same length,
and then reading them synchronously as a word over
$(\Ar\cup\{\$\})\times(\Ar\cup\{\$\})$.
Similarly, the left padding map $\delta_L$
pads the shorter word on the left.
These padding maps allow one to reduce questions about
binary relations on $\Ar^*$ to questions about regular languages.

Let $\M$ be a monoid generated by~$\Ar$,
and let $L\subseteq\Ar^*$ be a regular language
that maps onto~$\M$.
The language $L$ is intended to serve as a set of normal forms.
For words~$u,v$ in~$\Ar^*$, we write~$u=_\M v$ if they
represent the same element of~$\M$.
For $a$ in~$\Ar\cup\{\varepsilon\}$, define
\[
L_a=\{(u,v)\in L\times L \mid ua=_\M v\},
\qquad
{}_aL=\{(u,v)\in L\times L \mid au=_\M v\}.
\]
Thus $L_a$ encodes right multiplication by~$a$,
while ${}_aL$ encodes left multiplication.
The pair~$(\Ar,L)$ is called an \emph{automatic structure} if, for every
$a\in\Ar\cup\{\varepsilon\}$, the padded relation~$L_a\delta_R$ is regular.
It is called a \emph{biautomatic structure} if, for every
$a\in\Ar\cup\{\varepsilon\}$, all four relations~$L_a\delta_R$,~${}_aL\delta_R$,~$
L_a\delta_L$ and~${}_aL\delta_L$ are regular.

In practice, one constructs a rational relation describing
multiplication by a generator and then proves that it has bounded
length difference.
The following standard result, obtained by combining
\cite[Corollary~2.5]{FS93} and \cite[Proposition~4]{HT05},
allows one to deduce regularity:

\begin{proposition}
\label{prop:rational-to-regular}
Let $R\subseteq \Ar^*\times\Ar^*$ be rational.
If there exists $k\ge0$ such that
\[
\bigl||u|-|v|\bigr| \le k
\quad\text{for all } (u,v)\in R,
\]
then $R\delta_R$ and $R\delta_L$ are regular.
\end{proposition}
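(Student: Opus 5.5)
The plan is to show that bounded length difference lets us convert an asynchronous transducer for $R$ into a synchronous automaton reading the padded pairs, by buffering the bounded lag between the two tapes in a finite state.

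\emph{Normalizing the transducer.} Fix a transducer $\mathcal T=(Q,\Ar,\Ar,\delta,q_0,F)$ recognizing $R$. Remove states that are not both accessible from $q_0$ and coaccessible to $F$; this does not change $R$. For a path $\pi$ from $q_0$ to a state $p$, let $\ell(\pi)=(\text{number of symbols read})-(\text{number of symbols written})$, a signed integer. The key claim is that the lag $|\ell(\pi)|$ is bounded along every trimmed path. Suppose not, and let $M$ be the maximal length difference produced by a shortest path from each state to $F$ (finitely many states, so $M$ is finite). If some accessible path reached lag $|\ell|>k+M$, completing it to an accepting path would give a pair $(u,v)\in R$ with $\bigl||u|-|v|\bigr|>k$, a contradiction. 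Hence every accessible path has $|\ell(\pi)|\le K:=k+M$.

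\emph{Building the synchronous automaton for $R\delta_R$.} Construct a finite automaton over $(\Ar\cup\{\$\})^2$ whose states are triples $(p,w,s)$. Here $p\in Q$; $w$ is a buffer word of length at most $K$ holding whichever tape is ahead (tagged by side); and $s$ is a flag recording whether either tape has already been padded. Simulate $\mathcal T$: on reading a synchronous letter $(a,b)$, nondeterministically run a bounded number of $\mathcal T$-transitions consuming $a$ and emitting $b$, matching against and updating the buffer. The length bound keeps the buffer within $K$, so the state set is finite. Padding symbols $\$$ are accepted only at the end of the shorter word, and once a side has been padded no further real letters may appear on that side. Acceptance requires $p\in F$ and an empty buffer. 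A standard argument shows this accepts exactly $R\delta_R$.

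\emph{The left-padded case.} For $R\delta_L$, observe that reversal preserves both rationality (reverse every transition of $\mathcal T$) and bounded length difference. Moreover, $R\delta_L$ is the reversal of $R^{\mathrm{rev}}\delta_R$, and regular languages are closed under reversal. So $R\delta_L$ is regular as well.

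\emph{Main obstacle.} The delicate step is the bounded-lag claim: it needs the trimming, and it needs care with $\varepsilon$-cycles that change the lag. Any cycle with nonzero lag change on a trimmed path could be pumped to violate the bound $k$. So every cycle has zero net lag, and the bound follows. This is exactly the content of \cite[Corollary~2.5]{FS93} and \cite[Proposition~4]{HT05}, which I would cite for the final form.
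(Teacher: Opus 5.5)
Your argument is correct, but it is not what the paper does. The paper gives no proof and imports the statement by combining \cite[Corollary~2.5]{FS93}, which handles the right-padded (synchronized) encoding of bounded-discrepancy rational relations, with \cite[Proposition~4]{HT05}, which supplies the left-padded one. You instead prove it directly.

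Your completion argument (extend any accessible path by a shortest path to $F$) already gives the lag bound $K=k+M$ on every prefix of an accepting run. The ``suppose not'' framing and the cycle-pumping remark are therefore unnecessary. This bound is exactly what makes a finite buffer suffice. In a written version, make two points explicit:
\begin{itemize}
\item Eliminate $\varepsilon\mid\varepsilon$ transitions first. Otherwise ``a bounded number of $\mathcal T$-transitions'' per synchronous letter is false, although defining the synchronous transitions by existence of a $\mathcal T$-path would be harmless anyway.
\item A single tagged buffer is enough because, advancing greedily along a fixed accepting path, after each synchronous step one tape is fully processed. The other tape's backlog is at most the absolute lag of a prefix, hence at most $K$.
\end{itemize}

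For $R\delta_L$, you reduce to the right-padded case via $\delta_L(u,v)=\bigl(\delta_R(u^{\mathrm{rev}},v^{\mathrm{rev}})\bigr)^{\mathrm{rev}}$, with $R^{\mathrm{rev}}$ rational and having the same length bound. This replaces the appeal to \cite{HT05} by the closure under reversal that the paper already uses (via \cite{Ber79}) for its right-to-left transducer.

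The citation route is shorter and places the result within the general theory of synchronized rational relations. Yours is self-contained and shows precisely where bounded length difference enters, namely in the finiteness of the buffer.
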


This criterion will be used below.
We construct finite-state transducers describing multiplication
in the tableaux monoid and prove that the difference in lengths
between input and output normal forms is uniformly bounded.
It follows that the tableaux monoid is biautomatic.

\subsection{Normal forms}
Let
\[
K=\{A_1\cdots A_k\in \col_n(m)^*
\mid A_iA_{i+1}\text{ is semistandard for all }i\}.
\]
By convergence of $\Colo$,
the language $K$ is precisely the set of normal forms.

\begin{lemma}
The language $K$ is regular.
\end{lemma}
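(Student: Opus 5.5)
The language~$K$ is defined by a condition that only looks at pairs of adjacent letters. Since the alphabet~$\col_n(m)$ is finite, such a ``local'' language is regular. I would prove this in one of two equivalent ways; the first is the most direct, so I would present it and mention the second as an alternative.

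First, I would build an explicit deterministic finite automaton recognizing~$K$. Its state set is~$Q=\{q_0\}\sqcup\col_n(m)\sqcup\{q_{\mathrm{fail}}\}$, where~$q_0$ is the initial state. All states except~$q_{\mathrm{fail}}$ are accepting.
\begin{itemize}
\item From~$q_0$, reading a column~$A$ leads to the state~$A$.
\item From a state~$A\in\col_n(m)$, reading~$B$ leads to the state~$B$ if~$AB$ is semistandard, and to~$q_{\mathrm{fail}}$ otherwise.
\item The state~$q_{\mathrm{fail}}$ loops on every letter.
\end{itemize}
The semistandardness test for a pair is the explicit condition recalled before Lemma~\ref{L:TwocolumnsLemmata}, namely~$\height(B)\le\height(A)$ and~$A(i)\le B(i)$ for~$i\le\height(B)$. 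It is a fixed finite table because~$\col_n(m)$ is finite.

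Next, I would check by induction on~$k$ that after reading~$A_1\cdots A_k$ with~$k\ge1$, the automaton is in state~$A_k$ if every adjacent pair~$A_iA_{i+1}$ is semistandard, and in~$q_{\mathrm{fail}}$ otherwise. The empty word is accepted at~$q_0$, consistent with~$\varepsilon\in K$. Hence the automaton accepts exactly~$K$.

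Alternatively, let~$N$ be the finite set of non-semistandard pairs, which are exactly the sources of the rules of~$\Colo_2$. Then
\[
K=\col_n(m)^*\setminus\bigcup_{AB\in N}\col_n(m)^*\,AB\,\col_n(m)^*,
\]
which is regular by closure of regular languages under finite unions, concatenation and complement. This also recovers the fact, from Theorem~\ref{thm:ColumnPresentation}, that~$K$ is the set of~$\Colo$-irreducible words.

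There is no real obstacle here. The only point needing care is the inductive invariant, in particular the treatment of the empty word and of one-letter words, which are vacuously in~$K$.
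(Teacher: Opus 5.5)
Your proof is correct and takes the same approach as the paper: an automaton that only remembers the previous column, which needs finitely many states because $\col_n(m)$ is finite. You make the states, the transitions and the inductive invariant explicit, where the paper only sketches them, and your alternative description of $K$ as the complement of the union of the sets $\col_n(m)^*AB\,\col_n(m)^*$, one for each of the finitely many non-semistandard pairs $AB$, is also valid.
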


\begin{proof}
To verify semistandardness, it suffices to check adjacent pairs.
An automaton reading a word in~$\col_n(m)^*$ needs only to store the
previous column.
Since $\col_n(m)$ is finite, this requires finitely many states.
\end{proof}

\subsection{Right multiplication}

Fix~$C$ in~$\col_n(m)$ and a normal form~$u=A_1\cdots A_k$ in~$K$.
Since~$\Colo$ is length preserving, the~$\Colo$-normal form of~$uC$ has
length~$|u|+1$.

We compute the normal form of $uC$ by a single right-to-left pass,
carrying a column through the word.
Define inductively a sequence of carried columns by setting $D_k=C$.
For $i=k,k-1,\dots,1$, normalize the adjacent pair $A_iD_i$ by computing~$\col(A_i\star D_i)=D_{i-1}B_i$.
Thus at step $i$ the column~$B_i$ is produced on the right, while
$D_{i-1}$ is carried to the next step.

We maintain the following invariant.
After processing positions $k,k-1,\dots,i+1$, the suffix
$B_{i+1}\cdots B_k$ is $\Colo$-irreducible and
\[
A_{i+1}\cdots A_k C \; =_{T_n^m}\; D_i\,B_{i+1}\cdots B_k.
\]
The invariant holds initially since~$D_k=C$.
For~$k\ge1$, the first step produces the irreducible suffix~$B_k$.
For~$1\le i<k$, assume that the invariant holds before step~$i$.
We prove that after step $i$ the suffix
$B_iB_{i+1}\cdots B_k$ remains $\Colo$-irreducible.
Since $B_{i+1}\cdots B_k$ is already irreducible, it suffices to show
that the pair $B_iB_{i+1}$ is semistandard.
Because~$u$ lies in $K$, the pair $A_iA_{i+1}$ is semistandard.
Hence $\height(A_{i+1})\le \height(A_i)$ and
$A_i(r)\le A_{i+1}(r)$ for all~$r\le \height(A_{i+1})$.
Moreover, from~$\col(A_{i+1}\star D_{i+1})=D_iB_{i+1}$,
the columns $D_i$ and $B_{i+1}$ are obtained rowwise by taking minima
and maxima:
\[
D_i(r)=\min\{A_{i+1}(r),D_{i+1}(r)\},
\qquad
B_{i+1}(r)=\max\{A_{i+1}(r),D_{i+1}(r)\},
\]
whenever these entries are defined.
In particular,~$D_i(r)\le A_{i+1}(r)\le B_{i+1}(r)$.
Since $B_i$ is the right column of $\col(A_i\star D_i)$, its entries are
given by~$B_i(r)=\max\{A_i(r),D_i(r)\}$.
It follows that for every row~$r\le \height(B_{i+1})$,
\[
B_i(r)=\max\{A_i(r),D_i(r)\}\le A_{i+1}(r)\le B_{i+1}(r).
\]
Finally,
\[
\height(B_{i+1})
\le \height(A_{i+1})
\le \height(A_i)
\quad\text{and}\quad
\height(B_{i+1})\le \height(D_i),
\]
so that
\[
\height(B_{i+1})
\le \min(\height(A_i),\height(D_i))
= \height(B_i).
\]
Thus $B_i(r)\le B_{i+1}(r)$ whenever both entries are defined and
$\height(B_{i+1})\le \height(B_i)$, showing that the pair~$B_iB_{i+1}$
is semistandard.
This proves that the suffix remains $\Colo$-irreducible and that the
invariant is preserved.

After completing the pass, we obtain the word~$v=D_0B_1\cdots B_k$.
The invariant gives~$uC=_{T_n^m}v$ and shows that~$B_1\cdots B_k$ is~$\Colo$-irreducible.
If~$k\ge1$, the pair~$D_0B_1=\col(A_1\star D_1)$ is
semistandard by construction, and if~$k=0$, one has~$v=C$.
Thus~$v$ is~$\Colo$-irreducible and, by convergence,
is the unique normal form of~$uC$.

To formalize this process, we use the standard reversal trick.
For a word~$u=A_1\cdots A_k$, write
$u^{\mathrm{rev}}=A_k\cdots A_1$, with
$\varepsilon^{\mathrm{rev}}=\varepsilon$.
If $R\subseteq \col_n(m)^*\times\col_n(m)^*$, define
\[
R^{\mathrm{rev}}
=
\{(u^{\mathrm{rev}},v^{\mathrm{rev}})\mid (u,v)\in R\}.
\]
A transducer reading from right to left recognizes
$R^{\mathrm{rev}}$, and rational relations are closed under reversal,~\cite[p.~65--66]{Ber79}.

\begin{proposition}
\label{prop:right-mult}
For every column $C$ in~$\col_n(m)$, the relation
\[
K_C=\{(u,v)\in K\times K\mid uC =_{T_n^m} v\}
\]
is rational. Moreover, for all $(u,v)\in K_C$, one has $|v|=|u|+1$.
\end{proposition}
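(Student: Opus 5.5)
We will realize the right-to-left pass described above as a finite-state transducer acting on reversed words, and then transfer rationality back with the reversal trick. Fix $C$ in~$\col_n(m)$. The set of states will be $Q=\{q_0\}\sqcup\col_n(m)\sqcup\{q_f\}$, where a state $D\in\col_n(m)$ records the currently carried column. We read $u^{\mathrm{rev}}=A_k\cdots A_1$ and write $v^{\mathrm{rev}}=B_k\cdots B_1D_0$. The transitions are as follows. There is an $\varepsilon$-transition $q_0\xrightarrow{\varepsilon\mid\varepsilon}C$ that initializes the carry $D_k=C$. For every pair $(A,D)$ with $\col(A\star D)=D'B$, which is well defined by Lemma~\ref{L:TwocolumnsLemmata}, there is a transition $D\xrightarrow{A\mid B}D'$. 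Finally, for every column $D$ there is a transition $D\xrightarrow{\varepsilon\mid D}q_f$ that emits the last carried column, and we take $F=\{q_f\}$. This transducer is finite because $\col_n(m)$ is finite. When the pair $AD$ is already semistandard, one simply has $D'B=AD$, so no case distinction is needed.

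Next, we intersect with normal forms. The transducer accepts $(x,y)$ for arbitrary input words $x$, so we consider the relation
\[
S=\mathcal R\cap (K^{\mathrm{rev}}\times K^{\mathrm{rev}}),
\]
where $\mathcal R$ is the relation it recognizes. Restricting the input to $K^{\mathrm{rev}}$ is a product with a regular language, and $K^{\mathrm{rev}}$ is regular since $K$ is regular and regular languages are closed under reversal. Such a restriction preserves rationality, because we can run the automaton for $K^{\mathrm{rev}}$ in parallel on the input tape. The same holds for restricting the output, with the automaton run in parallel on the output tape. Hence $S$ is rational, and therefore $S^{\mathrm{rev}}$ is rational by \cite{Ber79}.

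We then check that $S^{\mathrm{rev}}=K_C$. Given $u\in K$, the unique accepting run on $u^{\mathrm{rev}}$ outputs exactly $(D_0B_1\cdots B_k)^{\mathrm{rev}}$; the run is deterministic apart from when the final $\varepsilon$-step fires, and acceptance forces that step to occur at the end. The invariant argument preceding the proposition shows that $v=D_0B_1\cdots B_k$ is $\Colo$-irreducible, so $v\in K$, and that $uC=_{T_n^m}v$. This gives $S^{\mathrm{rev}}\subseteq K_C$. Conversely, if $(u,v)\in K_C$, then $v$ and $D_0B_1\cdots B_k$ are both normal forms of the same element. By convergence (Theorem~\ref{thm:ColumnPresentation}) they are equal, which gives the reverse inclusion. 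The length statement follows directly: every letter-consuming transition emits exactly one letter, and the final transition emits one more letter, so $|v|=|u|+1$. Alternatively, it follows from the length-preserving nature of $\Colo$.

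The main obstacle is the correctness and irreducibility of the output, namely showing that the pass yields a normal form. This has already been established in the invariant argument, so the only remaining delicate points are formal. The first is restricting the input and output to $K$ without losing rationality. The second is handling the case $k=0$, where the run $q_0\to C\to q_f$ outputs $C$.
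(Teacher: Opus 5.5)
Your proof is correct and is essentially the paper's argument: the same carried-column transducer on reversed words, the same reliance on the preceding invariant for irreducibility of $D_0B_1\cdots B_k$, the same restriction to normal forms, and the same reversal trick. The differences are cosmetic: your extra initial state $q_0$, intersecting with $K^{\mathrm{rev}}\times K^{\mathrm{rev}}$ before reversing rather than with $K\times K$ afterwards, and spelling out the reverse inclusion via uniqueness of normal forms.
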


\begin{proof}
Fix $C$ in~$\col_n(m)$.
We define a finite-state transducer that processes the reversed
input word as follows.
Its set of states is~$Q=\col_n(m)\sqcup\{q_f\}$, where the current state in $\col_n(m)$ records the carried column and $q_f$
is a final sink state.
The initial state is $C$, and the unique accepting state is~$q_f$.
For every pair of columns $A,D$ in~$\col_n(m)$ such that~$\col(A\star D)=D'B$,
we add a transition
\[
D \xrightarrow{A\mid B} D'.
\]
Moreover, for every state $D$ in~$\col_n(m)$, we add a final transition~$D \xrightarrow{\varepsilon\mid D} q_f$.
Thus, after the whole input has been read, the machine outputs the last carried
column and moves to the accepting state.

Let~$R_C^{\mathrm{rev}}$ denote the relation recognized by this transducer, and define
\[
R_C
=
\{(u,v)\in \col_n(m)^*\times \col_n(m)^*
\mid
(u^{\mathrm{rev}},v^{\mathrm{rev}})\in R_C^{\mathrm{rev}}\}.
\]
By construction, $R_C^{\mathrm{rev}}$ is rational, and therefore~$R_C$ is rational as well, since rational relations are closed under reversal.

Let~$u=A_1\cdots A_k$ be in~$K$.
Reading~$u$ from right to left, the transducer defines columns
$D_k=C$, $\col(A_i\star D_i)=D_{i-1}B_i$, for~$i=k,k-1,\dots,1$,
and then applies the final transition
\[
D_0 \xrightarrow{\varepsilon\mid D_0} q_f.
\]
Thus the transducer outputs the word~$B_k\cdots B_1D_0$.
After reversal, this yields~$v=D_0B_1\cdots B_k$.
By the invariant established above, the word~$v$ is~$\Colo$-irreducible, hence~$v$ is in~$K$, and it is the~$\Colo$-normal form of~$uC$.
For every~$u\in K$, on input~$u^{\mathrm{rev}}$ the transducer
has the unique accepted output~$v^{\mathrm{rev}}$, where~$v\in K$
is the normal form of~$uC$.
By the definition of~$R_C$, it follows that~$K_C=R_C\cap(K\times K)$.

Since $K$ is regular and rational relations are closed under intersection with regular constraints, it follows that $K_C$ is rational.
Finally, the transducer outputs one column for each input column, together with one extra column via the final $\varepsilon$-transition.
Hence, for every~$(u,v)\in K_C$, one has~$|v|=|u|+1$.
\end{proof}

\subsection{Biautomaticity}
Since the tableaux monoid $T_n^m$ is commutative, for every column~$C$ in~$\col_n(m)$ and every~$u$ in~$K$ we have
\[
Cu =_{T_n^m} uC.
\]
Consequently, the relations describing left and right multiplication
coincide, that is, ${}_C K = K_C$.
By Proposition~\ref{prop:right-mult}, the relation $K_C$ is rational and
satisfies $|v|=|u|+1$ for all $(u,v)$ in~$K_C$.
It follows that~${}_C K$ is also rational and satisfies the same
length bound.

We now combine these results to establish the main automaticity theorem.

\begin{theorem}
\label{thm:biautomatic}
The tableaux monoid $T_n^m$ is biautomatic with respect to the
generating set $\col_n(m)$ and the normal form language~$K$.
\end{theorem}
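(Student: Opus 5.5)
To establish Theorem~\ref{thm:biautomatic}, I will verify directly the definition of a biautomatic structure for the pair~$(\col_n(m),K)$. Three things are needed: that~$K$ is a regular language mapping onto~$T_n^m$, and that for every~$a\in\col_n(m)\cup\{\varepsilon\}$ the four padded relations~$K_a\delta_R$, ${}_aK\delta_R$, $K_a\delta_L$, ${}_aK\delta_L$ are regular.

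Regularity of~$K$ is the lemma proved above. Surjectivity onto~$T_n^m$ follows from Theorem~\ref{thm:ColumnPresentation}: every element of~$T_n^m$ is represented by a word over~$\col_n(m)$, and its~$\Colo$-normal form lies in~$K$. In fact this normal form is unique, so~$K$ is a cross-section.

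Next I treat the generator case~$a=C\in\col_n(m)$. By Proposition~\ref{prop:right-mult}, $K_C$ is rational and every pair~$(u,v)\in K_C$ satisfies~$\bigl||u|-|v|\bigr|=1$. Proposition~\ref{prop:rational-to-regular}, applied with~$k=1$, then gives regularity of~$K_C\delta_R$ and~$K_C\delta_L$. By commutativity of~$T_n^m$, as noted just before the theorem, ${}_CK=K_C$. Hence~${}_CK\delta_R$ and~${}_CK\delta_L$ are the same regular languages.

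For~$a=\varepsilon$, the relation~$K_\varepsilon={}_\varepsilon K$ consists of the pairs~$(u,v)\in K\times K$ with~$u=_{T_n^m}v$. Since normal forms are unique by convergence of~$\Colo$, this is the diagonal~$\{(u,u)\mid u\in K\}$. It is rational: take an automaton for~$K$ and replace each transition labelled~$A$ by one labelled~$A\mid A$. Its length difference is~$0$, so Proposition~\ref{prop:rational-to-regular} gives regularity of both paddings. One could also see directly that its padded version is the diagonal image of~$K$ over the alphabet of pairs~$(A,A)$.

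The only point that needs care is the use of~${}_CK=K_C$. The definition of~${}_aL$ asks for pairs with~$au=_\M v$, whereas~$K_C$ asks for~$uC=_\M v$. These coincide precisely because~$Cu=_{T_n^m}uC$, and no separate left-to-right transducer is required. Once this is checked, the four regularity conditions hold for every~$a$, which proves the theorem.
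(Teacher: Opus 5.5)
Your proposal is correct and follows essentially the same route as the paper's proof: regularity of $K$, then Proposition~\ref{prop:right-mult} with the length bound $|v|=|u|+1$ fed into Proposition~\ref{prop:rational-to-regular}, then commutativity to identify ${}_CK$ with $K_C$, and finally the diagonal relation for the identity multiplier. The only minor variation is in the $\varepsilon$ case, where you obtain regularity of the padded diagonal via rationality and Proposition~\ref{prop:rational-to-regular} with $k=0$, whereas the paper simply notes that it follows directly from the regularity of $K$.
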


\begin{proof}
The language $K$ is regular and maps onto $T_n^m$.
Let $C\in\col_n(m)$.
By Proposition~\ref{prop:right-mult}, the relation
\[
K_C=\{(u,v)\in K\times K\mid uC =_{T_n^m} v\}
\]
is rational and satisfies $|v|=|u|+1$.
As observed above, the relation ${}_C K$ is also rational and satisfies
the same length bound.
Hence both $K_C$ and ${}_C K$ are rational relations with length
difference bounded by~$1$.
It follows from Proposition~\ref{prop:rational-to-regular} that the
padded relations~$K_C\delta_R$,~$K_C\delta_L$,~${}_C K\delta_R$, and~${}_C K\delta_L$
are regular.
For the identity multiplier, uniqueness of the normal forms in~$K$
gives
\[
K_\varepsilon={}_\varepsilon K
=\{(u,u)\mid u\in K\}.
\]
Its right-padded and left-padded encodings coincide and are regular,
since~$K$ is regular.

Therefore~$(\col_n(m),K)$ defines a biautomatic structure for $T_n^m$.
\end{proof}

The \emph{word problem} for a monoid asks whether two words over
a generating set represent the same element.
Automatic, and hence biautomatic, monoids have word problem
solvable in quadratic time,~\cite[Corollary~3.7]{CRRT01}.
Since~$T_n^m$ is biautomatic by
Theorem~\ref{thm:biautomatic}, we obtain:

\begin{corollary}
\label{cor:word-problem}
The word problem for the tableaux monoid~$T_n^m$ is solvable
in quadratic time.
\end{corollary}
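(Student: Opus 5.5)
The plan is to deduce Corollary~\ref{cor:word-problem} directly from Theorem~\ref{thm:biautomatic}, by invoking the general result \cite[Corollary~3.7]{CRRT01} for automatic monoids. To keep the argument self-contained, I would also give an explicit algorithm built from the transducer of Proposition~\ref{prop:right-mult}. The words in question are words over the finite generating set~$\col_n(m)$.

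First I treat the general citation. By Theorem~\ref{thm:biautomatic}, $(\col_n(m),K)$ is in particular an automatic structure, since the relations $K_C\delta_R$ are regular for all $C$. The quoted result then yields a quadratic-time algorithm. I would check one hypothesis explicitly: the structure in~\cite{CRRT01} must be over a finite generating set. This holds because $\col_n(m)$ is finite.

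For the direct argument, take a word $w=C_1\cdots C_p$ and compute its normal form $v_p$ iteratively. Set $v_0=\varepsilon$ and $v_j$ equal to the normal form of $v_{j-1}C_j$. By Proposition~\ref{prop:right-mult} and the invariant established before it, $v_j$ is obtained from $v_{j-1}$ by one right-to-left pass. This pass has $|v_{j-1}|=j-1$ steps, each applying a table lookup $\col(A\star D)$ over a finite table of constant size. The total cost is therefore $\sum_j O(j)=O(p^2)$.

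Given two words $w,w'$, I compute both normal forms this way. By convergence of $\Colo$ (Theorem~\ref{thm:ColumnPresentation}), $w$ and $w'$ represent the same element exactly when the two normal forms coincide letter by letter. This comparison costs linear time, so the whole procedure is quadratic in $|w|+|w'|$.

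I do not expect a real obstacle here. The only points needing care are the correctness of the single-pass invariant, which the paper has already proved, and the fact that each table lookup is constant-time. The table size depends on $m$ and $n$ but not on the input, so it is a constant.
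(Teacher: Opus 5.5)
Your proposal is correct, and its first part is exactly the paper's argument: Theorem~\ref{thm:biautomatic} gives in particular an automatic structure over the finite generating set $\col_n(m)$, and \cite[Corollary~3.7]{CRRT01} then yields a quadratic-time solution. Your supplementary direct algorithm is also valid. It iterates the length-preserving right-to-left pass of Proposition~\ref{prop:right-mult}, at cost $O(j)$ for the $j$-th step, and compares the resulting unique normal forms, which spells out for this structure the general argument behind the cited result.
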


Given elements~$a,b$ of a monoid~$\M$, we say that~$a$
\emph{left-divides}~$b$ if~$ax=b$ for some~$x\in\M$, and
that~$a$ \emph{right-divides}~$b$ if~$ya=b$ for some~$y\in\M$.
The corresponding decision problems are called the \emph{left-divisibility problem} and the \emph{right-divisibility problem}, respectively.

For the tableaux monoid, these problems can be decided directly
from the definition of the star product.
Since the monoid is commutative, left and right divisibility
coincide.
Let~$T$ and~$U$ be in~$T_n^m$.
The tableau~$T$ divides~$U$ if and only if there exists a tableau
$X$ in~$T_n^m$ such that~$U=T\star X$.
In that case, the number of columns of~$X$ is the number of
columns of~$U$ minus that of~$T$.
If this difference is negative, divisibility is impossible.
Otherwise, there are only finitely many tableaux with this
number of columns, height at most~$n$, and entries in~$[m]$.
These tableaux can be effectively enumerated, and each
candidate~$X$ can be tested for~$T\star X=U$.
Thus we obtain:

\begin{corollary}
The tableaux monoid~$T_n^m$ has decidable left- and
right-divisibility problems.
\end{corollary}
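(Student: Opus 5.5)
The statement to prove is the corollary that the left- and right-divisibility problems for $T_n^m$ are decidable. The argument is the finite-search procedure sketched just before the statement, and I would make each of its ingredients explicit.

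Since $T_n^m$ is commutative by the discussion in Subsection~\ref{SS:Tableauxmonoids}, left and right divisibility coincide, so it suffices to give an algorithm deciding, for tableaux $T$ and $U$ in $\YoungT_m^n$, whether $U=T\star X$ for some $X$. Tableaux may be given either directly or as words over $\col_n(m)$. In the latter case we first compute their normal forms in $K$. This is effective, for instance by iterating the right-to-left pass underlying Proposition~\ref{prop:right-mult}, or simply because $\Colo$ is finite and terminating by Theorem~\ref{thm:ColumnPresentation}.

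The key structural fact is additivity of the number of columns. If $T$ has shape $\lambda$ and $X$ has shape $\mu$, then $T\star X$ has shape $\lambda+\mu$. Its first row therefore has length $\lambda_1+\mu_1$, and the number of columns of a tableau equals its first-row length. Hence any witness $X$ has exactly $c(U)-c(T)$ columns, where $c$ counts columns. Equivalently, the length of a normal form in $K$ is additive, since $\Colo$ is length preserving.

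The algorithm then proceeds as follows. If $c(U)<c(T)$, answer no. Otherwise let $d=c(U)-c(T)$ and enumerate all words of length $d$ over the finite alphabet $\col_n(m)$ that lie in $K$. There are at most $|\col_n(m)|^d$ such words, and membership in $K$ is a finite check of adjacent pairs. Each such word is the column decomposition of a tableau $X$. For each candidate, compute $T\star X$ directly by row concatenation and sorting, and compare it with $U$ entrywise. Answer yes exactly when some candidate matches. Every tableau with $d$ columns arises in this enumeration, because $\col$ is a bijection onto $K$, so the search is exhaustive and the procedure is correct. It always terminates because the search space is finite.

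The only point requiring care is justifying that the search space is finite and complete, which rests on the column-count additivity above. This step is immediate from the definition of $\star$, so I expect no genuine obstacle. One could additionally strengthen the algorithm using cancellativity, as proved by Li: the witness, if it exists, is unique. One could also shrink the search by requiring $\mathrm{shape}(U)-\mathrm{shape}(T)$ to be a partition. Neither refinement is needed for decidability.
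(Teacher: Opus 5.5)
Your proposal is correct and follows the paper's own argument. Commutativity reduces both problems to a single divisibility problem. Additivity of the number of columns (the first-row length) under $\star$ confines any witness $X$ to the finite set of tableaux with $c(U)-c(T)$ columns, and each of these is enumerated and tested against $U$. You only spell out what the paper leaves implicit, namely why the column count is additive and how the enumeration can be run through the normal-form language $K$.
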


Recall that Green's relations~$\mathcal R$ and~$\mathcal L$ on a monoid~$\M$ are defined by~$a\,\mathcal R\,b$ if and only if~$a\M=b\M$, and~$a\,\mathcal L\,b$ if and only if~$\M a=\M b$. 
The remaining Green relations are defined by~$
a\,\mathcal J\,b$ if and only if~$\M a\M=\M b\M$,~$\mathcal H=\mathcal L\cap\mathcal R$,
and by taking~$\mathcal D$ to be the smallest equivalence relation
containing~$\mathcal L\cup\mathcal R$.
For~$\mathcal G$ in~$
\{\mathcal R,\mathcal L,\mathcal J,\mathcal H,\mathcal D\}$,
a monoid is called \emph{$\mathcal G$-trivial} if
$a\,\mathcal G\,b$ implies~$a=b$.

The definition of the star product also gives the following
structural property, independently of biautomaticity.

\begin{corollary}
\label{cor:green-trivial}
The tableaux monoid~$T_n^m$ is
$\mathcal R$-trivial, $\mathcal L$-trivial, $\mathcal J$-trivial,
$\mathcal H$-trivial, and $\mathcal D$-trivial.
\end{corollary}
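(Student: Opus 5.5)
Since~$T_n^m$ is commutative, left and right ideals coincide with two-sided ideals, so~$\mathcal R=\mathcal L=\mathcal J=\mathcal H=\mathcal D$. It therefore suffices to prove~$\mathcal J$-triviality. For a commutative monoid,~$a\,\mathcal J\,b$ means~$a\M=b\M$, and the plan reduces this to a statement about sizes of tableaux.

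The key observation is that the star product is additive on shapes: if~$T$ has shape~$\lambda$ and~$T'$ has shape~$\mu$, then~$T\star T'$ has shape~$\lambda+\mu$. In particular the number of boxes~$|T\star T'|=|T|+|T'|$, and this number is zero only for the empty tableau. Suppose~$T\,\mathcal J\,U$. Then there are tableaux~$X,Y$ with~$U=T\star X$ and~$T=U\star Y$. Hence
\[
T=T\star(X\star Y),
\]
so counting boxes gives~$|X\star Y|=|X|+|Y|=0$. Thus~$X=Y=\emptyset$, and~$U=T\star\emptyset=T$. In this way~$\mathcal J$-triviality, and therefore all five triviality statements, follow.

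As an alternative, one could cite cancellativity~\cite[Lemma~3.2]{LiDualCanonical26} to get~$X\star Y=\emptyset$. But the box-count argument is self-contained and only uses the definition of~$\star$, which matches the remark that the result comes from the definition of the star product.

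None of these steps is a real obstacle. The only point needing care is the reduction to~$\mathcal J$. This uses two facts: in a commutative monoid~$\M a\M=a\M=\M a$, and~$\mathcal H,\mathcal D\subseteq\mathcal J$ in general. So~$\mathcal J$-triviality implies the other four.
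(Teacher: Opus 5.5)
Your proof is correct and follows essentially the paper's argument: a size statistic that is additive under $\star$ and vanishes only on the empty tableau forces $X=Y=\emptyset$, and commutativity collapses all five Green relations into one. The differences are cosmetic: you use the total number of boxes where the paper uses the first-row length, and you reduce to $\mathcal J$ first, whereas the paper proves $\mathcal R$-triviality and then invokes commutativity.
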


\begin{proof}
Suppose that~$T\,\mathcal R\,U$.
Then there exist tableaux~$X,Y$ such that~$U=T\star X$ and~$T=U\star Y$.
The first-row length, taken to be zero for the empty tableau,
is additive under~$\star$.
Applying this observation to the two equalities shows that
the sum of the first-row lengths of~$X$ and~$Y$ is zero.
Hence~$X=Y=\emptyset$, and therefore~$T=U$.
This proves~$\mathcal R$-triviality.

Since the monoid is commutative, the principal left, right,
and two-sided ideals generated by an element coincide.
Consequently,~$\mathcal L=\mathcal R=\mathcal J$ are equality
relations. By their definitions,~$\mathcal H$ and~$\mathcal D$
are equality relations as well.
\end{proof}

\begin{remark}
Deciding any Green relation on~$T_n^m$ is therefore equivalent
to deciding equality. By Corollary~\ref{cor:word-problem},
all Green relations on~$T_n^m$ are decidable in quadratic time.
\end{remark}

\section{Finite coherent presentations of the tableaux monoid}
\label{S:ColumnCoherentPresentationsTableauxMonoid}

We begin by recalling the notion of coherent presentations of monoids
in the framework of polygraphs; see~\cite{GaussentGuiraudMalbos15,GuiraudMalbos18}
for detailed accounts.
We then extend the column presentation to a finite coherent presentation
of the tableaux monoid and study its homological consequences via
higher-dimensional rewriting.
Finally, we describe actions of the tableaux monoid on categories
in terms of this coherent presentation.

\subsection{Coherent presentations and polygraphs}
\label{SS:CoherentPresentations}

Rewriting systems can be interpreted as $2$-polygraphs with a unique $0$-cell.
A \emph{$2$-polygraph with a unique $0$-cell} $P$ consists of a pair $(P_1,P_2)$, where $P_1$ is a set
of generating $1$-cells and~$P_2$ is a globular extension of the free monoid
$P_1^\ast$, viewed as a $1$-category.
The elements of $P_2$ are called \emph{generating $2$-cells} and are written~$\beta : u \;\dfl\; v$, 
where~$u,v\in P_1^\ast$ are respectively the \emph{source} and the
\emph{target} of~$\beta$,  denoted by~$s_1(\beta)$ and~$t_1(\beta)$.
Any rewriting system $R$ on a set $X$ gives rise to such a $2$-polygraph, whose
generating $2$-cells are precisely the rewriting rules of~$R$.
Recall that a \emph{$(2,1)$-category} is a category enriched in groupoids.
We denote by~$\tck{P}_2$ the $(2,1)$-category freely generated by the
$2$-polygraph~$P$.
A pair $(f,g)$ of $2$-cells in~$\tck{P}_2$ satisfying~$s_1(f)=s_1(g)$ and~$t_1(f)=t_1(g)$
is called a \emph{$2$-sphere} of~$\tck{P}_2$.
A \emph{$(3,1)$-polygraph} is a pair~$(P,P_3)$ consisting of a $2$-polygraph
$P$ together with a globular extension $P_3$ of the $(2,1)$-category
$\tck{P}_2$.
Its elements are \emph{generating $3$-cells}
\[
A : f \;\tfl\; g,
\]
where $(f,g)$ is a $2$-sphere in~$\tck{P}_2$.
The $2$-cells $f$ and $g$ are respectively called the \emph{source} and the
\emph{target} of~$A$, and are denoted by $s_2(A)$ and $t_2(A)$.
Such a $3$-cell may be represented by the following globular diagrams:
\[
\xymatrix @C=8em {
\bullet
	\ar @/^4ex/ [r] ^-{u} _-{}="src"
	\ar @/_4ex/ [r] _-{v} ^-{}="tgt"
	\ar@2 "src"!<-15pt,-10pt>;"tgt"!<-15pt,10pt> _-*+{f} ^-{}="srcA"
	\ar@2 "src"!<+15pt,-10pt>;"tgt"!<+15pt,10pt> ^-*+{g} _-{}="tgtA"
	\ar@3 "srcA"!<8pt,0pt> ; "tgtA"!<-8pt,0pt> ^-{A}
&
\bullet
}
\qquad\text{or equivalently}\qquad
\xymatrix@!C@C=4.5em{
u
	\ar@2@/^3ex/ [r] ^{f} _{}="src"
	\ar@2@/_3ex/ [r] _{g} ^{}="tgt"
& 
v
\ar@3 "src"!<0pt,-7pt>;"tgt"!<0pt,7pt> ^-{A}
}
\]
where $\bullet$ denotes the unique $0$-cell of~$P$.
We denote by~$\tck{P}_3$ the free $(3,1)$-category generated by the
$(3,1)$-polygraph $(P,P_3)$.
An \emph{extended presentation} of a monoid $\M$ is a $(3,1)$-polygraph whose
underlying $2$-polygraph presents~$\M$.
It is said to be a \emph{coherent presentation} if the cellular extension~$P_3$ of the $(2,1)$-category $\tck{P}_2$ is \emph{acyclic}, that is, if for every
$2$-sphere $(f,g)$ in~$\tck{P}_2$ there exists a $3$-cell $A$ in~$\tck{P}_3$ with
$s_2(A)=f$ and $t_2(A)=g$.
The $3$-cells of~$\tck{P}_3$ are called the \emph{syzygies} of the presentation.
Squier’s coherence theorem,~\cite{Squier94, GuiraudMalbos18}, asserts that any convergent rewriting system~$R$ presenting a monoid~$\M$
can be extended into a coherent presentation of~$\M$, by adding, for each critical branching $(f,g)$ of~$R$, a
generating $3$-cell of the form
\[
\xymatrix @R=0.8em @C=2.5em @!C {
        & v
        \ar@2@/^/  [dr] ^-{f'}
        \ar @3[]!<0pt,-10pt>;[dd]!<0pt,10pt> ^-*+{A_{f,g}}
        \\
        u
        \ar@2@/^/  [ur] ^-{f}
         \ar@2@/_/ [dr] _-{g}
        && w
        \\
        & v'
         \ar@2@/_/ [ur] _-{g'}
}
\]
where $f'$ and $g'$ are chosen confluent rewriting paths.

\subsection{A finite coherent extension of the column presentation}
We extend the column presentation into a finite coherent presentation of the tableaux monoid by showing that all critical branchings admit uniformly bounded confluence diagrams and by adjoining a finite family of generating $3$-cells.

For~$\Colo$, we choose one representative of each critical branching
up to exchange of its two branches, listing the rewrite of~$AB$
first and the rewrite of~$BC$ second.
All counts below use this convention.

\begin{proposition}
\label{prop:three-steps}
Let $A$, $B$, and $C$ be in~$\col_n(m)$, and consider a critical branching of~$\Colo$ with source~$ABC$.
Then each of the two one-step reducts of~$ABC$ admits a rewriting path of length at most~$2$ to the common normal form~$\col(A\star B\star C)$.
Equivalently, every maximal rewriting sequence starting from~$ABC$ reaches the normal form in at most~$3$ steps.
\end{proposition}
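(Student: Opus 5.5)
Since every rule acts on each padded row independently, I will reduce the statement to a statement about sorting three-letter words. For a column word $w=XYZ$ and a row index $i$, consider the padded row word $\widehat{X}(i)\widehat{Y}(i)\widehat{Z}(i)$. By the proof of Theorem~\ref{thm:ColumnPresentation}, a rewrite at the first position replaces $\widehat X(i)\widehat Y(i)$ by its sorted rearrangement, and similarly at the second position. This holds simultaneously for all $i$. The normal form $\col(A\star B\star C)$ has all three padded row words weakly increasing. So a rewriting sequence from $ABC$ is a sequence of alternating "adjacent sort" operations $s_1$ (positions $1,2$) and $s_2$ (positions $2,3$), applied to all rows at once. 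Since each pair admits at most one rule and applying the same rule twice in a row is impossible (its target is irreducible), any rewriting path from $ABC$ alternates $s_1,s_2,s_1,\dots$ or $s_2,s_1,s_2,\dots$. A step is only possible while the current pair at that position is non-semistandard, that is, while at least one row is unsorted there.

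The key step is then the elementary fact about three letters $x,y,z$ in a totally ordered set: applying $s_1s_2s_1$ (sort positions $1,2$, then $2,3$, then $1,2$) yields the sorted word, and likewise $s_2s_1s_2$. This is bubble sort on three elements, or the braid relation in $S_3$ acting by sorting. After the first $s_1$ we have $x\le y$ at positions $1,2$. After $s_2$ the maximum of the three sits at position $3$. The final $s_1$ sorts the first two. The other order is symmetric. Applying this rowwise, the word $s_1s_2s_1(ABC)$ has all padded rows weakly increasing. I also need to check that this forces every adjacent pair to be semistandard. This holds because semistandardness of a pair is exactly rowwise $\widehat{X}(i)\le\widehat{Y}(i)$, and the resulting padded vectors are those of genuine columns: sorting rowwise commutes with the column structure, as in Lemma~\ref{L:TwocolumnsLemmata} and the min/max description.

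Given this, the argument runs as follows. Start from either one-step reduct, say $\col(A\star B)C$, obtained by $s_1$. Any further rewriting path must alternate and begin with $s_2$. Suppose some of the intended operations $s_2$ or $s_1$ is not applicable because the pair is already semistandard. Then the rowwise sort is the identity, so skipping it changes nothing, and the path simply stops earlier. In every case, after at most two more steps the word equals the rowwise result of $s_1s_2s_1$, which is irreducible. By convergence it is $\col(A\star B\star C)$. The same holds for the other reduct. Hence every maximal path has length at most $3$.

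The main obstacle is the bookkeeping of this last step. I must justify that a path which stops early, or skips a non-applicable sort, is still captured by the same rowwise computation: once a rewriting step is impossible, the word is already irreducible, hence is the normal form. I also must ensure that heights behave correctly under padding with $\infty$. The padded formalism from the termination proof already handles this, so I expect the remaining care to be purely notational.
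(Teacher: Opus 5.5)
Your proposal is correct and follows the paper's proof. Both arguments track the padded rows through the alternating adjacent sorts, and the paper's key inequality $\widehat{B_3}(i)\le\widehat{C_2}(i)$ is exactly your observation that after $s_1s_2$ the maximum of each row sits in the third position. Your handling of the case where a step is skipped because it is the identity is also sound: in a word of length $3$, if the next step is not applicable, the word is already irreducible.
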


\begin{proof}
Let~$ABC \;\dfl\; A_1B_1C$ and $ABC \;\dfl\; A\widetilde B\widetilde C$ be the two one-step reductions of the critical branching, where~$\col(A\star B)=A_1B_1$ and~$\col(B\star C)=\widetilde B\widetilde C$.
We first consider the branch beginning with the rewrite of~$AB$.
By construction, the pair~$A_1B_1$ is semistandard.
If $B_1C$ is also semistandard, then $A_1B_1C$ is already irreducible, and this branch reaches the normal form in one step.
Assume now that $B_1C$ is not semistandard, and write~$\col(B_1\star C)=B_2C_2$.
Then
\[
A_1B_1C \;\dfl\; A_1B_2C_2,
\]
and the pair~$B_2C_2$ is semistandard by construction.
If $A_1B_2$ is semistandard, then $A_1B_2C_2$ is irreducible, so this branch reaches the normal form in two steps.
Assume finally that $A_1B_2$ is not semistandard, and write~$\col(A_1\star B_2)=A_3B_3$.
Then
\[
A_1B_2C_2 \;\dfl\; A_3B_3C_2,
\]
and the pair~$A_3B_3$ is semistandard.
It remains to show that $B_3C_2$ is also semistandard.
Using the padded notation introduced in the proof of Theorem~\ref{thm:ColumnPresentation}, the semistandardness of~$A_1B_1$ gives~$\widehat{A_1}(i)\le \widehat{B_1}(i)$, for all~$i$.
Moreover,~$\widehat{B_2}(i)=\min\{\widehat{B_1}(i),\widehat{C}(i)\}$ and~$\widehat{C_2}(i)=\max\{\widehat{B_1}(i),\widehat{C}(i)\}$, 
so both $\widehat{A_1}(i)$ and $\widehat{B_2}(i)$ are bounded above by~$\widehat{C_2}(i)$.
Finally,~$\widehat{B_3}(i)=\max\{\widehat{A_1}(i),\widehat{B_2}(i)\}$,
hence
\[
\widehat{B_3}(i)\le \widehat{C_2}(i)
\qquad\text{for all } i.
\]
Thus the pair~$B_3C_2$ is semistandard, and therefore $A_3B_3C_2$ is irreducible.
We conclude that any maximal rewriting sequence beginning with the left rewrite reaches an irreducible word in at most three steps.
The argument for the branch beginning with the rewrite of~$BC$ is symmetric.
Since~$\Colo$ is convergent, the irreducible word obtained from either branch must be the unique normal form~$\col(A\star B\star C)$ of~$ABC$.
\end{proof}

By Proposition~\ref{prop:three-steps}, the confluence diagrams
can be chosen with at most three rewriting steps on each branch.
Since~$\Colo$ is finite and convergent, Squier's coherence theorem
gives the following finite coherent presentation.

\begin{theorem}
\label{T:ColumnCoherentPresentation}
The column $2$-polygraph~$\Colo$ admits a finite coherent extension
$\Colo_3$ as a $(3,1)$-polygraph presenting the tableaux monoid~$T_n^m$.

More precisely, let $A,B,C$ be in~$\col_n(m)$ such that
$AB$ and $BC$ are not semistandard, and write
\[
\col(A\star B)=A_1B_1,\quad \col(B_1\star C)=B_2C_2,\quad \col(B\star C)=B_3C_3,\quad \col(A\star B_3)=A_4B_4.
\]
Then adjoining a generating $3$-cell $\Xr_{A,B,C}$ filling the
corresponding critical branching
\[
\xymatrix @R=0.8em @C=2.5em @!C{
& A_1B_1C
\ar@2 [r] ^-{A_1\alpha_{B_1,C}}
\ar @3[]!<55pt,-10pt>;[dd]!<55pt,10pt> ^-*+{\Xr_{A,B,C}}
& A_1B_2C_2
\ar@2@/^/ [dr] ^-{\alpha_{A_1,B_2}C_2}
\\
ABC
\ar@2@/^/ [ur] ^-{\alpha_{A,B}C}
\ar@2@/_/ [dr] _-{A\alpha_{B,C}}
&&&
\col(A\star B\star C)
\\
& AB_3C_3
\ar@2 [r] _-{\alpha_{A,B_3}C_3}
& A_4B_4C_3
\ar@2@/_/ [ur] _-{A_4\alpha_{B_4,C_3}}
}
\]
for each such triple $(A,B,C)$ yields a finite coherent presentation of
the tableaux monoid~$T_n^m$, with the convention that whenever one of the
pairs $B_1C$, $A_1B_2$, $AB_3$, or $B_4C_3$ is semistandard,
the corresponding arrow in the diagram is the appropriate identity $2$-cell.
\end{theorem}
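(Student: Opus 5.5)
The plan is to apply Squier's coherence theorem, in the form recalled in Subsection~\ref{SS:CoherentPresentations}, to the column presentation. That theorem states that a convergent $2$-polygraph becomes a coherent presentation once one generating $3$-cell is adjoined for each critical branching, provided each $3$-cell fills the branching with a chosen pair of confluent paths. By Theorem~\ref{thm:ColumnPresentation}, $\Colo$ is finite and convergent and presents $T_n^m$. So what remains is to list the critical branchings, exhibit the specific confluence paths drawn in the statement, and check that the resulting family of $3$-cells is finite.

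\textbf{Critical branchings.} As in the confluence part of the proof of Theorem~\ref{thm:ColumnPresentation}, $\Colo$ is quadratic with at most one rule per source. Hence every critical branching has source $ABC$ with $AB$ and $BC$ both non-semistandard, and its two branches are $\alpha_{A,B}C$ and $A\alpha_{B,C}$. Aspherical and Peiffer branchings need no generating $3$-cells: they are filled by identities and interchange in the free $(3,1)$-category. Following the chosen convention, we take one representative per unordered pair of branches. The indexing set is then contained in $\col_n(m)^3$, which is finite, and so $\Colo_3$ is finite.

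\textbf{Confluence paths.} For each such triple, the proof of Proposition~\ref{prop:three-steps} already gives the upper branch. It consists of $\alpha_{A,B}C$, then $A_1\alpha_{B_1,C}$ (an identity if $B_1C$ is semistandard), then $\alpha_{A_1,B_2}C_2$ (an identity if $A_1B_2$ is semistandard), and it ends at the irreducible word $\col(A\star B\star C)$. Note that at each stage only one redex can be present, so this path is forced. The lower branch is handled by the symmetric argument, namely the mirror-image padded-entry inequalities with $\min$ and $\max$ exchanged. It consists of $A\alpha_{B,C}$, then $\alpha_{A,B_3}C_3$, then $A_4\alpha_{B_4,C_3}$. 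The last step is the one requiring care. I would verify directly that $\widehat{A_4}(i)\le\widehat{B_4'}(i)$ for the final middle column, using $\widehat{B_3}\le\widehat{C_3}$ and the rowwise min/max descriptions, exactly dualizing the computation $\widehat{B_3}(i)\le\widehat{C_2}(i)$ in Proposition~\ref{prop:three-steps}. Both paths end at irreducible words, and by convergence these coincide with $\col(A\star B\star C)$. This gives a $2$-sphere, and we take it as the boundary of $\Xr_{A,B,C}$.

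\textbf{Conclusion.} Squier's theorem (\cite{Squier94,GuiraudMalbos18}) then says that $(\Colo,\{\Xr_{A,B,C}\})$ is acyclic, and hence a coherent presentation. The underlying $2$-polygraph presents $T_n^m$ by Theorem~\ref{thm:ColumnPresentation}. The main obstacle is bookkeeping rather than conceptual. One point is the degenerate cases, where some intermediate pair is already semistandard and the corresponding $2$-cells must be read as identities. The other is checking that the symmetric lower branch truly terminates after the three indicated steps, so that the diagram in the statement has exactly the displayed shape.
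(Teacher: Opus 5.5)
Your proposal is correct and follows essentially the paper's route: the paper likewise obtains the theorem by applying Squier's coherence theorem to the finite convergent presentation $\Colo$, with Proposition~\ref{prop:three-steps} supplying explicit confluence paths of length at most three on each branch. Your explicit check that $\widehat{A_4}(i)\le\min\{\widehat{B_4}(i),\widehat{C_3}(i)\}$ just spells out the lower-branch case that the paper calls ``symmetric''.
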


\subsubsection{Polygraphic resolution and Koszulity}
Endow~$\Bbbk[T_n^m]$ with the connected grading determined by
$\deg(\emptyset)=0$ and~$\deg(T)=\lambda_1$ when~$T$ has shape~$\lambda$. Equivalently, every column generator has degree~$1$. The quadratic relations of~$\Colo$ are homogeneous for this
grading.

We use the convention for polygraphs of associative algebras from~\cite{GuiraudHoffbeckMalbos19}: algebra generators are generating $0$-cells and defining relations are generating $1$-cells. Thus cell dimensions are shifted down by one relative to the monoid polygraphs used above.

For~$N\ge2$, define the function~$\ell_N:\mathbb N\to\mathbb N$ by
\[
\ell_N(2q)=Nq,
\qquad
\ell_N(2q+1)=Nq+1
\qquad(q\ge0).
\]
A graded polygraphic resolution~$X$ is called
\emph{$\ell_N$-concentrated} if every generating $p$-cell of~$X$ is
homogeneous of internal degree~$\ell_N(p+1)$.
For~$N=2$ one has~$\ell_2(q)=q$, so $\ell_2$-concentration means that
the generating $p$-cells have internal degree~$p+1$. Equivalently, the
associated free bimodule in homological degree~$p+1$ is generated in
internal degree~$p+1$.

We use the graded augmentation
$\epsilon_{\mathrm{gr}}:\Bbbk[T_n^m]\to\Bbbk$,
which sends~$\emptyset$ to~$1$ and every nonempty tableau to~$0$.
Recall that a connected graded algebra~$A$ is \emph{Koszul}
if the graded left~$A$-module~$\Bbbk$, defined by its
augmentation, admits a graded free resolution whose module
in homological degree~$r$ is generated in internal degree~$r$.

The linearization of~$\Colo$ is a quadratic convergent presentation of
the graded algebra~$\Bbbk[T_n^m]$.
By~\cite[Theorem~6.2.4]{GuiraudHoffbeckMalbos19}, this reduced
convergent presentation admits Squier's polygraphic
resolution. This resolution is~$\ell_2$-concentrated:
its generating $p$-cells arise from iterated overlaps
supported on~$p+1$ column generators and therefore have
internal degree~$p+1$.
 Hence~\cite[Proposition~7.2.2]{GuiraudHoffbeckMalbos19} applies and yields a
linear graded bimodule resolution. Consequently:

\begin{corollary}
\label{cor:Koszul}
Let~$\Bbbk$ be a field. The tableaux algebra~$\Bbbk[T_n^m]$ is Koszul.
\end{corollary}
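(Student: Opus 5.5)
The statement to prove is Corollary~\ref{cor:Koszul}: the tableaux algebra $\Bbbk[T_n^m]$ is Koszul. I would deduce it from the quadratic convergent column presentation together with the Gr\"obner/PBW criterion, rather than constructing a resolution by hand.

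The first step is to linearize $\Colo$. By Theorem~\ref{thm:ColumnPresentation}, $\Colo$ is finite, reduced, quadratic and convergent, so the binomials $AB-A'B'$, with $AB$ non-semistandard, present $\Bbbk[T_n^m]$ as the quotient of the free algebra on $\col_n(m)$. The ideal is generated in degree~$2$, each column has degree~$1$, and the rules preserve length. Hence the algebra is connected graded, quadratic, and its grading agrees with $\deg(T)=\lambda_1$.

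The second step is to fix the monomial order from the PBW discussion. Columns are ordered lexicographically by padded entries, and words carry the degree-lexicographic order. Each rule satisfies $A'B'<AB$ because $\widehat{A'}(i)=\min\{\widehat A(i),\widehat B(i)\}\le\widehat A(i)$, with strict inequality in some row. So the leading term of $AB-A'B'$ is $AB$, and the rewriting system is the one oriented by this order.

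The third step is to show that the binomials form a Gr\"obner basis. Convergence of $\Colo$, in particular confluence of the critical branchings on $ABC$ (Proposition~\ref{prop:three-steps}), means the defining relations form a quadratic Gr\"obner basis. Equivalently, the semistandard column words form a PBW basis.

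The final step invokes the classical theorem (Priddy; see also \cite{GuiraudHoffbeckMalbos19}) that a quadratic algebra with a quadratic Gr\"obner basis, or a PBW basis, is Koszul. Alternatively, one can follow the polygraphic route. Squier's resolution of the linearized convergent presentation \cite[Theorem~6.2.4]{GuiraudHoffbeckMalbos19} has generating $p$-cells indexed by chains $C_0C_1\cdots C_p$ in which every adjacent pair is non-semistandard. Each such cell has internal degree $p+1$, so the resolution is $\ell_2$-concentrated, and \cite[Proposition~7.2.2]{GuiraudHoffbeckMalbos19} turns it into a linear resolution of $\Bbbk$.

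I expect the main obstacle to be this last, concentration step. One has to check that the higher cells of the Squier/Anick-type resolution really are indexed only by chains of length $p+1$ with no longer overlaps. This holds because every rule has source of length~$2$, so any overlap of sources is a single letter. The other point to verify is that working over an arbitrary field causes no trouble: the relations are binomial with coefficients $\pm1$, so the reduction argument is characteristic-free.
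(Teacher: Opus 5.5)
Your proposal is correct. Its ``alternative'' final paragraph is exactly the paper's proof. The paper linearizes $\Colo$ and takes Squier's polygraphic resolution from \cite[Theorem~6.2.4]{GuiraudHoffbeckMalbos19}. It observes that this resolution is $\ell_2$-concentrated, because generating $p$-cells are supported on $p+1$ columns, and concludes by \cite[Proposition~7.2.2]{GuiraudHoffbeckMalbos19}.

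Your primary route differs in its last step. You stop at the quadratic Gr\"obner basis and quote Priddy's theorem that a quadratic algebra with a PBW basis is Koszul. The paper also establishes this Gr\"obner basis in its PBW subsection, with the same degree-lexicographic order and the same $\min$-argument giving $A'<A$. Your route is shorter and entirely classical. However, it is essentially the PBW argument of \cite{DaughertyGonzalezMunizOcalPanTorres25}, now with the explicit basis of semistandard column words. The paper avoids it on purpose, since its stated aim is to recover Koszulity by higher-dimensional rewriting.

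What the polygraphic route buys is an explicit linear resolution. Its generators are the chains $C_0\cdots C_p$ with every adjacent pair non-semistandard. In low dimension its cells correspond to the columns, the rules $\alpha_{A,B}$, and the critical branchings filled by the $3$-cells $\Xr_{A,B,C}$ of Theorem~\ref{T:ColumnCoherentPresentation}, with dimensions shifted down by one. This ties Koszulity directly to the coherent presentation. Your justification of the concentration step is more explicit than the paper's one-line assertion, and it is the real content of that step: every source has length $2$, so overlaps are single letters and a $p$-fold overlap is supported on exactly $p+1$ columns.
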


This recovers, by higher-dimensional rewriting, the Koszulity established
by PBW methods in
\cite[Corollary~2.14]{DaughertyGonzalezMunizOcalPanTorres25}.

\subsection{Counting generators, rules, and critical branchings}
We now count the generators, rewriting rules, and critical branchings of the
column presentation~$\Colo$.
The idea is to express these numbers in terms of tableaux with two and three
columns.
The following lemma provides explicit formulas for the numbers of such tableaux.

\begin{lemma}
\label{lem:PmnnQmnn}
Let
\[\displaystyle 
P_{m,n}
=
\sum_{1\le l\le k\le n}\#\YoungT_m(2^l,1^{k-l}),
\qquad
Q_{m,n}
=
\sum_{1\le j\le l\le k\le n}\#\YoungT_m(3^j,2^{l-j},1^{k-l}).
\]
Then
\[\displaystyle 
P_{m,n}
=
\sum_{1\le l\le k\le n}
\frac{k-l+1}{k+1}\binom{m}{k}\binom{m+1}{l},
\]
and
\[\displaystyle 
Q_{m,n}
=
\sum_{1\le j\le l\le k\le n}
\frac{(k-l+1)(l-j+1)(k-j+2)}
{(k+1)(k+2)(l+1)}
\binom{m}{k}\binom{m+1}{l}\binom{m+2}{j}.
\]
\end{lemma}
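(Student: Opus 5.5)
Since $P_{m,n}$ and $Q_{m,n}$ are sums over shapes, it suffices to count $\#\YoungT_m(\lambda)$ for each shape $\lambda$ with at most three columns and then sum. Our tool is the Lindström--Gessel--Viennot lemma, or equivalently the Jacobi--Trudi identity in column form. A tableau of shape $\lambda$ with at most three columns is a sequence of columns $C_1C_2C_3$ in which each adjacent pair is semistandard. The number of such sequences of heights $k\ge l\ge j$ is the principal specialization $s_\lambda(1^m)$. We evaluate it with the dual Jacobi--Trudi determinant $\det\bigl(e_{\lambda'_i-i+r}(1^m)\bigr)$, where $e_h(1^m)=\binom{m}{h}$ and $\lambda'=(k,l,j)$ (resp.\ $(k,l)$).

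For two columns we get
\[
\#\YoungT_m(2^l,1^{k-l})=\binom{m}{k}\binom{m}{l}-\binom{m}{k+1}\binom{m}{l-1}.
\]
We then rewrite this as $\frac{k-l+1}{k+1}\binom{m}{k}\binom{m+1}{l}$. To do so, factor out $\binom{m}{k}\binom{m}{l}$, use $\binom{m}{k+1}=\binom{m}{k}\frac{m-k}{k+1}$ and $\binom{m}{l-1}=\binom{m}{l}\frac{l}{m-l+1}$, and check that
\[
1-\frac{(m-k)l}{(k+1)(m-l+1)}=\frac{(k-l+1)(m+1)}{(k+1)(m-l+1)}.
\]
Finally use $\binom{m}{l}\frac{m+1}{m-l+1}=\binom{m+1}{l}$. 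As a sanity check, the same formula follows from the hook-content formula applied to the two-column shape.

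For three columns the cleanest route is the hook-content formula $s_\lambda(1^m)=\prod_{x\in\lambda}\frac{m+c(x)}{h(x)}$. The contents in column $1$ are $0,-1,\dots,-(k-1)$, in column $2$ they are $1,0,\dots,2-l$, and in column $3$ they are $2,1,\dots,3-j$. The content products are therefore $m(m-1)\cdots(m-k+1)$, $(m+1)\cdots(m-l+2)$ and $(m+2)\cdots(m-j+3)$, that is, $k!\binom{m}{k}$, $l!\binom{m+1}{l}$ and $j!\binom{m+2}{j}$. It remains to compute the hook product.

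In column $3$, the box in row $r$ has hook $j-r+1$, so the column contributes $j!$. In column $2$, row $r\le j$ has hook $l-r+2$ and row $r>j$ has hook $l-r+1$; the product is $\frac{(l+1)!}{l-j+1}$. In column $1$, the hook is $k-r+3$ for $r\le j$, $k-r+2$ for $j<r\le l$, and $k-r+1$ for $r>l$. The product is $\frac{(k+2)!}{(k-j+2)(k-l+1)}$, where one must check carefully that the missing factors are exactly $k-l+1$ (between the ranges $r>l$ and $j<r\le l$) and $k-j+2$ (between the ranges $j<r\le l$ and $r\le j$).

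Dividing the content product by the hook product gives
\[
\frac{(k-l+1)(k-j+2)(l-j+1)}{(k+1)(k+2)(l+1)}\binom{m}{k}\binom{m+1}{l}\binom{m+2}{j}.
\]
Summing over $1\le j\le l\le k\le n$ gives $Q_{m,n}$. The two-column case is the specialization $j=0$ of the same computation, and it reproduces the formula for $P_{m,n}$.

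The only delicate point is this hook bookkeeping in column $1$, namely the gaps in the sequence of hook lengths. I would verify it on small cases: for $k=l=j=1$ the shape is a single row of length $3$, and the formula should give $\binom{m+2}{3}$; for the column shape $1^k$ it should give $\binom{m}{k}$.
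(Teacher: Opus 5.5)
Your proposal is correct and essentially the paper's proof. The three-column count is the paper's hook-content computation, with the same column-by-column content and hook products and the same two missing factors $k-l+1$ and $k-j+2$ in the first column. You derive the two-column count from the dual Jacobi--Trudi determinant and then check it against hook-content as the $j=0$ case, whereas the paper applies hook-content there directly; both computations are correct.
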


\begin{proof}
We use the hook-content formula for semistandard Young tableaux~\cite[Theorem~7.21.2]{StanleyEC2}:
for every partition~$\lambda$,
\[\displaystyle 
\#\YoungT_m(\lambda)
=
\prod_{x\in\lambda}\frac{m+c(x)}{h(x)},
\]
where, for a cell $x=(r,s)$ in row~$r$ and column~$s$, we have
\[
c(x)=s-r
\quad
\text{and}
\quad
h(x)=\lambda_r-s+\lambda'_s-r+1.
\]
Here~$\lambda_r$ (resp.~$\lambda'_s$) denotes the number of boxes in the~$r$-th row (resp.~$s$-th column) of~$\lambda$.

First, let $\lambda=(2^l,1^{k-l})$.
Its first column has height~$k$ and its second column has height~$l$.
Hence the numerator is
\[\displaystyle 
\prod_{i=1}^{k}(m+1-i)\;\prod_{i=1}^{l}(m+2-i)
=
k!\binom{m}{k}\,l!\binom{m+1}{l}.
\]
For the hook lengths, the cells in the first (resp. second) column contribute
\[\displaystyle 
\prod_{i=1}^{l}(k-i+2)\;\prod_{i=l+1}^{k}(k-i+1),
\qquad
\big(
\text{resp.}~\prod_{i=1}^{l}(l-i+1)
\big).
\]
Thus the total hook product is~$\displaystyle \frac{(k+1)!\,l!}{k-l+1}$.
Therefore
\[\displaystyle 
\#\YoungT_m(2^l,1^{k-l})
=
\frac{k-l+1}{k+1}\binom{m}{k}\binom{m+1}{l}.
\]

Now let $\lambda=(3^j,2^{l-j},1^{k-l})$.
Its columns have heights~$k$, $l$, and~$j$.
The numerator is
\[
\prod_{i=1}^{k}(m+1-i)\;
\prod_{i=1}^{l}(m+2-i)\;
\prod_{i=1}^{j}(m+3-i)
=
k!\binom{m}{k}\,l!\binom{m+1}{l}\,j!\binom{m+2}{j}.
\]
For the hook lengths, the first, second, and third columns contribute
\[
\prod_{i=1}^{j}(k-i+3)\;
\prod_{i=j+1}^{l}(k-i+2)\;
\prod_{i=l+1}^{k}(k-i+1),
\quad
\prod_{i=1}^{j}(l-i+2)\;
\prod_{i=j+1}^{l}(l-i+1),
\quad
\text{ and }
\quad
\prod_{i=1}^{j}(j-i+1).
\]
Multiplying these factors gives
\[
\frac{(k+2)!\,(l+1)!\,j!}
{(k-l+1)(l-j+1)(k-j+2)}.
\]
Hence
\[
\#\YoungT_m(3^j,2^{l-j},1^{k-l})
=
\frac{(k-l+1)(l-j+1)(k-j+2)}
{(k+1)(k+2)(l+1)}
\binom{m}{k}\binom{m+1}{l}\binom{m+2}{j}.
\]
Summing over all admissible indices yields the stated formulas for
$P_{m,n}$ and~$Q_{m,n}$.
\end{proof}

\begin{proposition}
\label{prop:counting-final}
Let $m$ and $n$ be in~$\mathbb N$ with $m\ge n$.
\begin{enumerate}[\bf i)]
\item The number of generators of~$\Colo$ is
\[
N_{m,n}= |\col_n(m)|=\sum_{k=1}^{n}\binom{m}{k}.
\]

\item The number of rewriting rules of~$\Colo$ is
\[
|\Colo_2|=N_{m,n}^2-P_{m,n}.
\]

\item The number of critical branchings of~$\Colo$ is
\[
|\mathrm{Crit}(\Colo)|
=
N_{m,n}^3-2N_{m,n}P_{m,n}+Q_{m,n}.
\]
\end{enumerate}
\end{proposition}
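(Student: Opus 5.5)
Part (i) is immediate: a column of height~$k$ is a strictly increasing $k$-tuple in~$[m]$, so there are $\binom{m}{k}$ of them, and summing over $1\le k\le n$ gives~$N_{m,n}$.

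For (ii), I count non-semistandard pairs by complement. The rules are indexed by ordered pairs $(A,B)\in\col_n(m)^2$ with $AB$ not semistandard, and there are $N_{m,n}^2$ ordered pairs in total. Via the column decomposition map~$\col$, semistandard pairs $AB$ correspond bijectively to tableaux with exactly two columns, of heights $k=\height(A)\ge l=\height(B)\ge1$ and $k\le n$. These are exactly the tableaux of shape $(2^l,1^{k-l})$ with $1\le l\le k\le n$. So their number is $P_{m,n}$, and $|\Colo_2|=N_{m,n}^2-P_{m,n}$.

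For (iii), critical branchings of the quadratic system~$\Colo$ are indexed by triples $(A,B,C)$ with both $AB$ and $BC$ non-semistandard, one for each source $ABC$ under the convention fixed before Proposition~\ref{prop:three-steps}. First I will justify this by the remark in the proof of Theorem~\ref{thm:ColumnPresentation}: each non-semistandard pair has a unique rule, and overlaps of length-$2$ sources are exactly words of length~$3$. Then I apply inclusion–exclusion on $N_{m,n}^3$ triples. Let $S_1$ be the set of triples with $AB$ semistandard and $S_2$ those with $BC$ semistandard. Then $|S_1|=|S_2|=P_{m,n}N_{m,n}$, since the free third (respectively first) column is arbitrary. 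Also, $S_1\cap S_2$ consists of triples with both adjacent pairs semistandard, which by the characterization of the image of~$\col$ are exactly column words of three-column tableaux. These have shapes $(3^j,2^{l-j},1^{k-l})$ with $1\le j\le l\le k\le n$, so $|S_1\cap S_2|=Q_{m,n}$. Hence the count is $N_{m,n}^3-2N_{m,n}P_{m,n}+Q_{m,n}$.

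The main point needing care is step (iii)'s identification of critical branchings with such triples. I must check that every triple with both pairs non-semistandard really gives an overlapping, hence critical, branching, with none degenerate, since the two rewrites act on different positions. I must also check that no other critical branchings exist, since sources of length~$2$ cannot overlap in a proper length-$4$ way nor be nested. The counting itself then reduces to Lemma~\ref{lem:PmnnQmnn}, which supplies the closed forms of $P_{m,n}$ and $Q_{m,n}$ if explicit formulas are wanted.
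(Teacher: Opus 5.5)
Your proposal is correct and follows the paper's proof essentially step for step. It uses the subset count for columns, the complement count of semistandard pairs via tableaux of shape $(2^l,1^{k-l})$, and inclusion--exclusion over triples, with the doubly semistandard triples identified with tableaux of shape $(3^j,2^{l-j},1^{k-l})$. Your extra check that the critical branchings are exactly the triples with both adjacent pairs non-semistandard just spells out what the paper asserts in one line from quadraticity and the uniqueness of the rule for each non-semistandard pair.
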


\begin{proof}
A column of height~$k$ is a strictly increasing $k$-tuple in~$[m]$, and hence
corresponds to a $k$-subset of~$[m]$.
There are $\displaystyle \binom{m}{k}$ such columns, and summing over $1\le k\le n$ proves~{\bf i)}.

For~{\bf ii)}, the rules of~$\Colo$ correspond to ordered pairs
$(A,B)\in\col_n(m)^2$ such that~$AB$ is non-semistandard.
Thus $|\Colo_2|$ is the total number of ordered pairs of columns minus the number of semistandard ones.
Fix $k,l$ with $1\le l\le k\le n$.
A pair~$AB$ with~$\height(A)=k$ and~$\height(B)=l$ is semistandard if and only if, when $B$ is placed to the right of~$A$, we obtain a tableau of shape~$(2^l,1^{k-l})$.
Therefore the number of semistandard pairs of heights $(k,l)$ is exactly~$\displaystyle \#\YoungT_m(2^l,1^{k-l})$.
Summing over all possible heights gives~$P_{m,n}$, and thus~$|\Colo_2|=N_{m,n}^2-P_{m,n}$.

For~{\bf iii)}, since~$\Colo$ is quadratic and reduced, its critical branchings are exactly the triples~$ABC$ of columns such that both adjacent pairs~$AB$ and~$BC$ are non-semistandard.
We count such triples by inclusion--exclusion.
There are $N_{m,n}^3$ triples of columns in total.
The number of triples for which~$AB$ is semistandard is~$N_{m,n}P_{m,n}$, since one chooses a semistandard pair~$AB$ and then an arbitrary column~$C$.
Similarly, the number of triples for which~$BC$ is semistandard is again~$N_{m,n}P_{m,n}$.
It remains to count the triples for which both~$AB$ and~$BC$ are semistandard.
Fix~$k,l,j$ with $1\le j\le l\le k\le n$.
A triple~$ABC$ with $\height(A)=k$, $\height(B)=l$, and $\height(C)=j$
satisfies that both~$AB$ and~$BC$ are semistandard if and only if the three columns together form a tableau of shape~$(3^j,2^{l-j},1^{k-l})$.
Hence the number of such triples is~$\displaystyle \#\YoungT_m(3^j,2^{l-j},1^{k-l})$,
and summing over all possible heights gives~$Q_{m,n}$.
Therefore
\[|\mathrm{Crit}(\Colo)|
=
N_{m,n}^3-2N_{m,n}P_{m,n}+Q_{m,n}.
\]
\end{proof}

\subsection{Example: the tableaux monoid for $m=3$ and $n=2$}
We illustrate the column presentation and its critical branchings
in a small representative case.
The columns of height at most $2$ are
\[
(1),\ (2),\ (3),\ (1,2),\ (1,3),\ (2,3),
\]
which form the generating set $\col_2(3)$ of the tableaux monoid $T_2^3$. 
In this case,~$N_{3,2}=6$,~$P_{3,2}=20$ and~$Q_{3,2}=50$,
so Proposition~\ref{prop:counting-final} yields~$|\Colo_2|=16$ and~$|\mathrm{Crit}(\Colo)|=26$.

The column presentation $\Colo$ consists of the following $16$ quadratic rules:
\begin{enumerate}[\bf i)]
\item \emph{Letter-letter rules}:~$(2)(1)\dfl (1)(2)$,~$(3)(1)\dfl (1)(3)$ and~$(3)(2)\dfl (2)(3)$.
\item \emph{Letter-column rules}:
\[
\begin{array}{lllll}
(1)(1,2)\dfl(1,2)(1), && (1)(1,3)\dfl (1,3)(1),& &(2)(1,2)\dfl(1,2)(2),\\
(2)(1,3)\dfl (1,3)(2), && (2)(2,3)\dfl (2,3)(2), &&(3)(1,2)\dfl(1,2)(3),  \\
(3)(1,3)\dfl (1,3)(3), && (3)(2,3)\dfl (2,3)(3).
\end{array}
\]
\item \emph{Column-column rules}:
\[
(1,3)(1,2)\dfl (1,2)(1,3),\qquad
(2,3)(1,2)\dfl (1,2)(2,3),\qquad
(2,3)(1,3)\dfl (1,3)(2,3).
\]
\item \emph{Interaction rules}:~$(2,3)(1)\dfl(1,3)(2)$ and~$(1)(2,3)\dfl (1,3)(2)$.
\end{enumerate}
The $26$ critical branchings decompose into the following families:
\begin{enumerate}[\bf i)]
\item One braid-type branching coming from the decreasing triple of letters:~$(3)(2)(1)$. In this case, the corresponding generating $3$-cell is the following:
\[
\xymatrix @R=0.8em @C=2.5em @!C{
& (2)(3)(1)
\ar@2 [r] ^-{(2)\alpha_{(3),(1)}}
\ar @3[]!<15pt,-10pt>;[dd]!<15pt,10pt> ^-*+{\Xr_{(3),(2),(1)}}
& (2)(1)(3)
\ar@2@/^/ [dr] ^-{\alpha_{(2),(1)}(3)}
\\
(3)(2)(1)
\ar@2@/^/ [ur] ^-{\alpha_{(3),(2)}(1)}
\ar@2@/_/ [dr] _-{(3)\alpha_{(2),(1)}}
&&&
(1)(2)(3)
\\
& (3)(1)(2)
\ar@2 [r] _-{\alpha_{(3),(1)}(2)}
& (1)(3)(2)
\ar@2@/_/ [ur] _-{(1)\alpha_{(3),(2)}}
}
\]
\item Nine branchings of the form~$xyA$,
where
\[
xy\in\{(2)(1),(3)(1),(3)(2)\}
\quad\text{and}\quad
A\in\{(1,2),(1,3),(2,3)\}.
\]
\item Three branchings of the form~$x(2,3)(1)$, where~$x\in\{(1),(2),(3)\}$.
\item Nine branchings of the form~$xAB$,
where
\[
x\in\{(1),(2),(3)\}
\quad\text{and}\quad
AB\in\{(1,3)(1,2),\ (2,3)(1,2),\ (2,3)(1,3)\}.
\]
\item One braid-type branching coming from the decreasing triple of columns:~$(2,3)(1,3)(1,2)$.
\item Three branchings of the form~$(2,3)(1)A$, where~$A\in\{(1,2),(1,3),(2,3)\}$.
\end{enumerate}
The remaining generating $3$-cells are obtained by specializing the generic confluence diagram of Theorem~\ref{T:ColumnCoherentPresentation} to each of the critical branchings listed above.
For brevity, we display only the braid-type branching on letters.

\subsection{Actions of the tableaux monoid on categories}
\label{SS:Actions}

We briefly indicate how the coherent column presentation can be used to
describe categorical actions of the tableaux monoid.

Recall from~\cite{GaussentGuiraudMalbos15} that an \emph{action of a monoid $\M$ on a category $\mathbf{C}$} consists of an endofunctor $A(u):\mathbf{C}\to\mathbf{C}$ for each element~$u$ in~$\M$, together with natural isomorphisms 
\[ 
A_{u,v}:A(v)\circ A(u)\Rightarrow A(uv), 
\] 
and a unit isomorphism
\[
A_0:\operatorname{Id}_{\mathbf C}\Rightarrow A(1),
\]
subject to the usual associativity and unit coherence conditions. Denote by~$\Act(\M)$ the category whose objects are actions
of~$\M$ on categories and whose morphisms are pseudonatural
transformations.
Let~$\Sigma=(P,P_3)$ be a coherent presentation of a monoid~$\M$, where
$P$ is its underlying $2$-polygraph. Denote by~$\overline{\Sigma}=\tck{P}_2/P_3$
the associated $(2,1)$-category, obtained by identifying the source and
target of every generating $3$-cell. By
\cite[Theorem~5.1.6]{GaussentGuiraudMalbos15}, there is an equivalence
of categories
\[
\Act(\M)\simeq 2\Cat(\overline{\Sigma},\Cat).
\]
Here $2\Cat(\overline{\Sigma},\Cat)$ denotes the category of strict $2$-functors and pseudonatural transformations.
Equivalently, an object on the right may be described as a $2$-functor
from~$\tck{P}_2$ to~$\Cat$ that takes the two boundary $2$-cells of every
element of~$P_3$ to equal natural transformations.

Applying this correspondence to the coherent column presentation~$\Colo_3$ of~$T_n^m$ yields the following explicit description of actions of the tableaux monoid on categories.

\begin{theorem}
\label{thm:actions}
Up to equivalence, an action of the tableaux monoid~$T_n^m$
on a category~$\mathbf C$ is described by the following data:
\begin{enumerate}[\bf i)]
\item an endofunctor $F_A:\mathbf C\to\mathbf C$ for each column
$A$ in~$\col_n(m)$;
\item for every rewriting rule
\[
\alpha_{A,B}:AB \;\dfl\; A'B'
\]
of~$\Colo$, a natural isomorphism
\[
\alpha_{A,B}^\mathbf{C}:
F_B\circ F_A
\Rightarrow
F_{B'}\circ F_{A'};
\]
\item for every generating $3$-cell~$\Xr_{A,B,C}$ of~$\Colo_3$, the
two composites of natural isomorphisms corresponding to its source and
target are equal. Equivalently, the associated coherence diagram in~$\Cat$ commutes.
\end{enumerate}
\end{theorem}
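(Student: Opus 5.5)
The plan is to specialize the equivalence $\Act(\M)\simeq 2\Cat(\overline{\Sigma},\Cat)$ of \cite[Theorem~5.1.6]{GaussentGuiraudMalbos15} to $\Sigma=\Colo_3$. Theorem~\ref{T:ColumnCoherentPresentation} tells us that $\Colo_3$ is a coherent presentation of $T_n^m$, so every action of $T_n^m$ on a category is, up to equivalence, a strict $2$-functor $\overline{\Colo_3}\to\Cat$. What remains is to unpack what such a $2$-functor amounts to, using the universal properties of the free constructions involved.

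\emph{Step 1 (the $1$-cells).} The $(2,1)$-category $\tck{\Colo}_2$ has a single $0$-cell, so a $2$-functor to $\Cat$ picks out one category $\mathbf C$. Its $1$-cells form the free monoid $\col_n(m)^\ast$. A strict $2$-functor is therefore determined on $1$-cells by an endofunctor $F_A$ for each column $A$. Composition is sent contravariantly to match the convention $A_{u,v}:A(v)\circ A(u)\Rightarrow A(uv)$, so a word $A_1\cdots A_k$ goes to $F_{A_k}\circ\cdots\circ F_{A_1}$. This gives item~i).

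\emph{Step 2 (the $2$-cells).} The $2$-cells of $\tck{\Colo}_2$ are freely generated, as a $(2,1)$-category, by the rules $\alpha_{A,B}$. A $2$-functor out of it is therefore determined by choosing, for each rule $\alpha_{A,B}:AB\dfl A'B'$, a $2$-cell $F_B\circ F_A\Rightarrow F_{B'}\circ F_{A'}$ in $\Cat$, and this choice must be invertible. The reason is that a $(2,1)$-category is groupoid-enriched, so the images of the formal inverses $\alpha^{-1}_{A,B}$ force each $\alpha^{\mathbf C}_{A,B}$ to be a natural isomorphism. Whiskerings and composites are then determined by functoriality. This gives item~ii).

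\emph{Step 3 (the quotient).} By definition, $\overline{\Colo_3}=\tck{\Colo}_2/\Colo_3$, and a $2$-functor on $\tck{\Colo}_2$ factors through this quotient exactly when it identifies the source and target of each generating $3$-cell $\Xr_{A,B,C}$. Concretely, the two composites of whiskered natural isomorphisms read off from the diagram of Theorem~\ref{T:ColumnCoherentPresentation}, with identity conventions whenever a pair is already semistandard, must coincide. This gives item~iii). Pseudonatural transformations match on both sides by the cited equivalence, so the description holds up to equivalence of categories.

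The main obstacle I expect is Step 3. One has to check that the congruence generated by the generating $3$-cells, closed under whiskering and composition in $\tck{\Colo}_2$, is respected as soon as the generators alone are respected. This follows from strictness of the $2$-functor, which preserves whiskering and both compositions. One must also handle the degenerate cases of the diagram correctly, where some arrows are identities. Beyond this the argument is formal bookkeeping of the universal properties.
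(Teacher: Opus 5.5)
Your proposal is correct and follows the paper's own route. The paper obtains the theorem in the same way: it applies \cite[Theorem~5.1.6]{GaussentGuiraudMalbos15} to the coherent presentation $\Colo_3$ of Theorem~\ref{T:ColumnCoherentPresentation}, reading a $2$-functor $\overline{\Colo_3}\to\Cat$ as a $2$-functor on $\tck{\Colo}_2$ that identifies the two boundaries of each $\Xr_{A,B,C}$, which your Steps 1--3 unpack in more detail. (The reversed composite $F_{A_k}\circ\cdots\circ F_{A_1}$ comes only from the diagrammatic convention $F\star_0 G=G\circ F$ for composition in $\Cat$; the $2$-functor itself is not contravariant.)
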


\begin{example}
Consider the tableaux monoid $T_2^3$.
One of the rewriting rules of its column presentation is
\[
(2,3)(1)\;\dfl\;(1,3)(2).
\]
Let $\mathbf C$ be a category.
An action of $T_2^3$ on $\mathbf C$ therefore consists of choosing
endofunctors
\[
F_{(1)},\ F_{(2)},\ F_{(3)},\
F_{(1,2)},\ F_{(1,3)},\ F_{(2,3)} :
\mathbf C \to \mathbf C,
\]
together with a natural isomorphism
\[
\alpha_{(2,3),(1)}:
F_{(1)}\circ F_{(2,3)}
\Rightarrow
F_{(2)}\circ F_{(1,3)}.
\]
Each rewriting rule of the column presentation similarly yields a
natural isomorphism between the corresponding composites of
endofunctors.
The coherent presentation further imposes compatibility relations
between these isomorphisms.
For instance, since $x(2,3)(1)$ is a critical branching for each
$x$ in~$\{(1),(2),(3)\}$, there are two distinct rewriting sequences
starting from the triple $x(2,3)(1)$.
When $x=(2)$, these two sequences produce two composites of natural
isomorphisms from
\[
F_{(1)}\circ F_{(2,3)}\circ F_{(2)}
\]
to the functor corresponding to the normal form of $(2)(2,3)(1)$.
The generating $3$-cell $\Xr_{(2),(2,3),(1)}$ ensures that these two
composites coincide.
Thus an action of $T_2^3$ is described by explicit endofunctors together
with natural isomorphisms satisfying the required coherence diagrams.
\end{example}

\section*{Acknowledgements}
We thank the referee for a careful reading of the manuscript
and for helpful comments and suggestions that improved
the clarity and presentation of the paper.

\begin{small}
\renewcommand{\refname}{\Large\textsc{References}}
\bibliographystyle{plain}
\bibliography{biblioCURRENT}

\def\cprime{$'$}
\begin{thebibliography}{10}

\bibitem{Per2016}
Per Alexandersson.
\newblock Gelfand–tsetlin polytopes and the integer decomposition property.
\newblock {\em European Journal of Combinatorics}, 54:1--20, 2016.

\bibitem{Ber79}
Jean Berstel.
\newblock {\em {Transductions and context-free languages}}, volume~38 of {\em
  Leitf{\"a}den der Angewandten Mathematik und Mechanik [Guides to Applied
  Mathematics and Mechanics]}.
\newblock {B. G. Teubner}, 1979.

\bibitem{BookOtto93}
Ronald Book and Friedrich Otto.
\newblock {\em String-rewriting systems}.
\newblock Texts and Monographs in Computer Science. Springer-Verlag, 1993.

\bibitem{CainGrayMalheiro15}
Alan~J. Cain, Robert~D. Gray, and Ant{\'o}nio Malheiro.
\newblock Finite {G}r\"obner--{S}hirshov bases for {P}lactic algebras and
  biautomatic structures for {P}lactic monoids.
\newblock {\em J. Algebra}, 423:37--53, 2015.

\bibitem{CainGrayMalheiro15b}
Alan~J. Cain, Robert~D. Gray, and Ant{\'o}nio Malheiro.
\newblock Rewriting systems and biautomatic structures for {C}hinese,
  hypoplactic, and {S}ylvester monoids.
\newblock {\em Internat. J. Algebra Comput.}, 25(1-2):51--80, 2015.

\bibitem{CainGrayMalheiro19}
Alan~J. Cain, Robert~D. Gray, and Ant\'{o}nio Malheiro.
\newblock Crystal monoids \& crystal bases: rewriting systems and biautomatic
  structures for plactic monoids of types {$A_n$}, {$B_n$}, {$C_n$}, {$D_n$},
  and {$G_2$}.
\newblock {\em J. Combin. Theory Ser. A}, 162:406--466, 2019.

\bibitem{CRRT01}
Colin~M. Campbell, Edmund~F. Robertson, Nikola Ruškuc, and Richard~M. Thomas.
\newblock Automatic semigroups.
\newblock {\em Theoretical Computer Science}, 250(1--2):365--391, 2001.

\bibitem{ChangDuanFraserLi20}
Wen Chang, Bing Duan, Chris Fraser, and Jian-Rong Li.
\newblock Quantum affine algebras and {G}rassmannians.
\newblock {\em Mathematische Zeitschrift}, 296(3-4):1539--1583, 2020.

\bibitem{CurienDuricGuiraud23}
Pierre-Louis Curien, Alen {\DH}uri{\'c}, and Yves Guiraud.
\newblock Coherent presentations of monoids with a right-noetherian garside
  family.
\newblock {\em Journal of Homotopy and Related Structures}, 18:115--152, 2023.

\bibitem{DaughertyGonzalezMunizOcalPanTorres25}
Spencer Daugherty, Nicolle González, Bárbara Muniz, Pablo~S. Ocal, Jianping
  Pan, and Jacinta Torres.
\newblock Structure and geometry of the tableaux algebra.
\newblock arXiv: 2510.27209, 2025.

\bibitem{DuncanRobertsonRuskuc99}
A.~J. Duncan, E.~F. Robertson, and N.~Ru\v{s}kuc.
\newblock Automatic monoids and change of generators.
\newblock {\em Mathematical Proceedings of the Cambridge Philosophical
  Society}, 127(3):403--409, 1999.

\bibitem{FS93}
Christiane Frougny and Jacques Sakarovitch.
\newblock Synchronized rational relations of finite and infinite words.
\newblock {\em Theoretical Computer Science}, 108(1):45--82, 1993.

\bibitem{Fulton97}
William Fulton.
\newblock {\em Young tableaux}, volume~35 of {\em London Mathematical Society
  Student Texts}.
\newblock Cambridge University Press, Cambridge, 1997.
\newblock With applications to representation theory and geometry.

\bibitem{GaussentGuiraudMalbos15}
St{\'e}phane Gaussent, Yves Guiraud, and Philippe Malbos.
\newblock Coherent presentations of {A}rtin monoids.
\newblock {\em Compos. Math.}, 151(5):957--998, 2015.

\bibitem{GuiraudHoffbeckMalbos19}
Yves Guiraud, Eric Hoffbeck, and Philippe Malbos.
\newblock Convergent presentations and polygraphic resolutions of associative
  algebras.
\newblock {\em Math. Z.}, 293(1-2):113--179, 2019.

\bibitem{GuiraudMalbos09}
Yves Guiraud and Philippe Malbos.
\newblock Higher-dimensional categories with finite derivation type.
\newblock {\em Theory Appl. Categ.}, 22:No. 18, 420--478, 2009.

\bibitem{GuiraudMalbos12advances}
Yves Guiraud and Philippe Malbos.
\newblock Higher-dimensional normalisation strategies for acyclicity.
\newblock {\em Adv. Math.}, 231(3-4):2294--2351, 2012.

\bibitem{GuiraudMalbos18}
Yves Guiraud and Philippe Malbos.
\newblock Polygraphs of finite derivation type.
\newblock {\em Math. Structures Comput. Sci.}, 28(2):155--201, 2018.

\bibitem{Hage26}
Nohra Hage.
\newblock Super {Cauchy} identities over signed alphabets.
\newblock To appear in \emph{Lie Theory and Its Applications in Physics},
  https://hal.science/view/index/docid/5424110, 2025.

\bibitem{Hage15}
Nohra Hage.
\newblock Finite convergent presentation of plactic monoid for type {C}.
\newblock {\em Internat. J. Algebra Comput.}, 25(8):1239--1263, 2015.

\bibitem{Hage2021Super}
Nohra Hage.
\newblock Super jeu de taquin and combinatorics of super tableaux of type {A}.
\newblock {\em International Journal of Algebra and Computation},
  32(05):929--952, 2022.

\bibitem{Hage25}
Nohra Hage.
\newblock A super {L}ittlewood--{R}ichardson type rule.
\newblock In Vladimir Dobrev, editor, {\em Lie Theory and Its Applications in
  Physics}, pages 253--259, Singapore, 2025. Springer Nature Singapore.

\bibitem{hage2026coherent}
Nohra Hage.
\newblock Coherent presentations for stylic monoids via row rewriting.
\newblock Preprint arXiv:2609.15515, 2026.

\bibitem{Hage2022RSK}
Nohra Hage.
\newblock A super {R}obinson--{S}chensted--{K}nuth correspondence with symmetry
  and the super {L}ittlewood--{R}ichardson rule.
\newblock {\em Journal of Algebraic Combinatorics}, 63:55, 2026.

\bibitem{HageMalbos17}
Nohra Hage and Philippe Malbos.
\newblock Knuth's coherent presentations of plactic monoids of type {A}.
\newblock {\em Algebr. Represent. Theory}, 20(5):1259--1288, 2017.

\bibitem{HageMalbos22}
Nohra Hage and Philippe Malbos.
\newblock {C}hinese syzygies by insertions.
\newblock {\em Semigroup Forum}, 104(1):88--108, 2022.

\bibitem{HT05}
Michael Hoffmann and Richard~M. Thomas.
\newblock Biautomatic semigroups.
\newblock In M.~Liskiewicz and R.~Reischuk, editors, {\em Fundamentals of
  Computation Theory}, volume 3623 of {\em Lecture Notes in Computer Science},
  pages 56--67. Springer, 2005.

\bibitem{LiDualCanonical26}
Jian-Rong Li.
\newblock Dual canonical bases for unipotent groups and base affine spaces.
\newblock {\em Journal of Algebra and Its Applications}, 25(3):2550354, 2026.

\bibitem{Meha2021}
Uran Meha.
\newblock Coherence for plactic monoids via rewriting theory and crystal
  structures.
\newblock {\em Algebras and Representation Theory}, 26:2289--2312, 2023.

\bibitem{Newman42}
Maxwell Newman.
\newblock On theories with a combinatorial definition of ``equivalence''.
\newblock {\em Ann. of Math. (2)}, 43(2):223--243, 1942.

\bibitem{Squier87}
Craig~C. Squier.
\newblock Word problems and a homological finiteness condition for monoids.
\newblock {\em J. Pure Appl. Algebra}, 49(1-2):201--217, 1987.

\bibitem{Squier94}
Craig~C. Squier, Friedrich Otto, and Yuji Kobayashi.
\newblock A finiteness condition for rewriting systems.
\newblock {\em Theoret. Comput. Sci.}, 131(2):271--294, 1994.

\bibitem{StanleyEC2}
Richard~P. Stanley.
\newblock {\em Enumerative Combinatorics. Volume 2}, volume~62 of {\em
  Cambridge Studies in Advanced Mathematics}.
\newblock Cambridge University Press, 1999.

\end{thebibliography}
\end{small}

\quad

\vfill
\begin{flushright}
\begin{small}
\noindent \textsc{Nohra Hage} \\
\url{nohra.hage@univ-catholille.fr} \\
ICL, Junia, Université Catholique de Lille,\\
 LITL, F-59000 Lille, France\\
\end{small}
\end{flushright}

\vspace{0.25cm}

\begin{small}---\;\;\today\;\;-\;\;\hhmm\;\;---\end{small} \hfill
\end{document}